\newtheorem{theorem}{Theorem}[section]
\newtheorem{lemma}[theorem]{Lemma}
\def \squareforqed{\hbox{\rlap{$\sqcap$}$\sqcup$}}
\def \qed{\ifmmode\squareforqed\else{\unskip\nobreak\hfil
          \penalty50\hskip1em\null\nobreak\hfil\squareforqed
          \parfillskip=0pt\finalhyphendemerits=0\endgraf}\fi}
\newenvironment{proof}{\noindent{\bf Proof} }{\qed\medskip}
\def\FF{{\mathbb F}}
\def\ZZ{{\mathbb Z}}
\def\NN{{\mathbb N}}
\def\QQ{{\mathbb Q}}
\def\RR{{\mathbb R}}
\def\CC{{\mathbb C}}
\def\height{\hbox{\rm ht}}
\def\lc{\hbox{lc}}
\def\tc{\hbox{tc}}
\def\RB{\hbox{RB}}
\newcommand\eg{{\it e.g.}}
\newcommand\ie{{\it i.e.}}
\title{Bounds on Factors in $\ZZ[x]$}
\author{John Abbott}
\begin{document}
\maketitle

\begin{abstract}
  We gather together several bounds on the sizes of coefficients which can
  appear in factors of polynomials in $\ZZ[x]$; we include a new bound
  which was latent in a paper by Mignotte, and a few minor improvements to
  some existing bounds.  We compare these bounds and show that none is
  universally better than the others.  In the second part of the paper we
  give several concrete examples of factorizations where the factors have
  ``unexpectedly'' large coefficients.  These examples help us understand
  why the bounds must be larger than you might expect, and greatly extend
  the collection published by Collins.
\end{abstract}

\section{Introduction}

How large can the coefficients of a factor of a polynomial be?  Let's try
an example.  The polynomial $f$ given below has an irreducible
factorization of the form $f(x) = g(x) \, g(-x)$ in $\ZZ[x]$.  The largest
coefficient in $f$ is~$2$; but how big is the largest coefficient of $g$?
\begin{eqnarray*}
f &=& x^{80} - 2x^{78} + x^{76} + 2x^{74} + 2x^{70} + x^{68} + 2x^{66} + x^{64} + x^{62} + 2x^{60} + 2x^{58}  \cr
  & & - 2x^{54} + 2x^{52} + 2x^{50} + 2x^{48} - x^{44} - x^{36} + 2x^{32} + 2x^{30} + 2x^{28} - 2x^{26}  \cr
  & & + 2x^{22} + 2x^{20} + x^{18} + x^{16} + 2x^{14} + x^{12} + 2x^{10} + 2x^{6} + x^{4} - 2x^{2} + 1
\end{eqnarray*}

\medskip

The effective factorization of polynomials in $\ZZ[x]$ (and thus also in
$\QQ[x]$ by Gauss's lemma) is one of computer algebra's success stories.  Modern
implementations running on current hardware take only a few seconds to
factorize even quite large polynomials with degrees in the hundreds~---~a
feat which would have been utterly impossible before the advent of computer
algebra.  Being able to compute reasonable bounds on the sizes of factors is a
crucial part of this success.

Let us see why these bounds are so important.  About forty years ago the
ideas and algorithms of Berlekamp~\cite{Ber67} (and later~\cite{Ber70}) and
Zassenhaus~\cite{Zas69} finally made polynomial factorization
feasible.\footnote{The much older algorithm of Newton was feasible only for
  very small inputs.}  Now their essence lies at the heart of every general
implementation.  This Berlekamp--Zassenhaus scheme has maintained its
ubiquity, despite the appearance of a number of relatively recent results
of considerable theoretical importance (\eg~showing that polynomial
factorization is polynomial time), simply because it works so well in
practice.  A good survey of the development of factorization algorithms can
be found in~\cite{Kal82}, \cite{Kal90}, and~\cite{Kal92}.

\medskip

We recall briefly the principal steps in the Berlekamp--Zassenhaus
algorithm to factorize a polynomial $f \in \ZZ[x]$:
\begin{quote}
{\small
\begin{description}
\item{(1)} make~$f$ primitive and square-free
\item{(2)} pick a suitable prime $p$
\item{(3)} determine the factorization in $\FF_p[x]$
\item{(4)} lift to a factorization modulo a large enough power $p^k$
\item{(5)} recover the true factors in $\ZZ[x]$
\end{description}
}
\end{quote}
In this article we look at the answer to the question: {\it What value
  of $k$ should we use in step~$(4)$?}  In fact, this is just the same
as asking: {\it How large can the coefficients of an irreducible
  factor of $f \in \ZZ[x]$ be?}  Intuitively we would expect the
factors to be ``smaller'' than $f$: of course, the degree is smaller,
but the coefficients need not be (as we shall in
sections~\ref{one-large-factor} and~\ref{high-ratio}).

As we shall see below, the need for bounds on the factors is
exemplified by the polynomial $x^4+1$.  It is irreducible in $\ZZ[x]$
but factorizes in every $\FF_p[x]$ into either four linears or two
quadratics.  This phenomenon is quite general: it is well known that
a general polynomial in $\ZZ[x]$ often has a complete modular
factorization in $\FF_p[x]$ which is finer than that in $\ZZ[x]$.  This
knowledge is commonly exploited in a process called {\bf degree
  analysis}: we compute factorizations modulo several different
primes, and use the degrees of the modular factors to deduce possible
degrees for the true factors in $\ZZ[x]$.  In some cases degree
analysis alone can prove that a polynomial is irreducible.

In the case of $x^4+1$ degree analysis by itself cannot prove irreducibility,
as all modular factorizations are compatible with the existence of two
quadratic factors in $\ZZ[x]$.  In fact, we know that $x^4+1$ is
irreducible, so regardless of the chosen prime $p$, and for every exponent
$k$, the lifted factorization modulo $p^k$ contains ``false'' factors.
Now, if we had no bound on the sizes of the coefficients in the factors, we
would have to continue lifting indefinitely because we could never be sure
whether a few more lifting steps might produce the true factors in
$\ZZ[x]$.  In other words, without a bound on the factors, the ideas of
Berlekamp and Zassenhaus produce a method which would fail to terminate on
some inputs.

Thus good computable bounds for the sizes of coefficients in factors are
essential for making the Berlekamp-Zassenhaus method into a general
algorithm.  Interestingly, Newton's method does not need any such bounds.

\medskip

In section~\ref{notation} we set out the notation used in this paper.  In
section~\ref{deg-aware-bounds} we present the classical ``degree aware''
bounds for the coefficients of factors, and make improvements to two of
them.  Using the often overlooked fact that each of these bounds gives
limits for each individual coefficient of the factor, we show how they can
be combined together to produce a tighter result than any one in isolation.
We compare the various bounds using concrete examples.  In
section~\ref{single-factor-bounds} we present a new ``single factor'' bound
(which was latent in an article of Mignotte~\cite{Mig74}) together with the
bound from~\cite{BTW93}.  Again we use concrete examples in the comparison.
In section~\ref{single-vs-deg-aware} we confront the two types of bound:
those from section~\ref{deg-aware-bounds} and those
from section~\ref{single-factor-bounds}.  In light of the examples in our
comparisons, we conclude that no bound is universally better or worse than
the others.

One common feature of all these bounds is that they are almost always
surprisingly large~---~one would be tempted to say {\it unreasonably
  large.}  In section~\ref{one-large-factor} we exhibit some examples
which help us understand why the bounds in
section~\ref{deg-aware-bounds} must be so large; and in
section~\ref{high-ratio} there are further examples which help explain
why the bounds in section~\ref{single-factor-bounds} have to be large.
These examples are the fruits of extensive computations using {\it ad
  hoc\/} C++ programs assisted by CoCoA or CoCoALib~\cite{CoCoA}.  The
examples extend considerably the collection published by
Collins~\cite{Col04}.

\medskip
\noindent
{\small
{\bf Note}
The answer to the question right at the start is: the largest coefficient
of $g$ is~$36$.  This is the irreducible factorization with largest ratio that we have found.
}
%%%%%%%%%%%%%%%%%%%%%%%%%%%%%%%%%%%%%%%%%%%%%%%%%%%%%%%%%%%%%%%%%%%%%%%%%%%%%
%%%%%%%%%%%%%%%%%%%%%%%%%%%%%%%%%%%%%%%%%%%%%%%%%%%%%%%%%%%%%%%%%%%%%%%%%%%%%

\section{Notation}\label{notation}

We introduce the notation and terminology we shall use throughout the paper.

Let $f \in \ZZ[x]$ be a polynomial.  We say that $f$ is {\bf primitive} if
there is no common factor greater than~$1$ dividing its coefficients.  We
shall write $\lc(f)$ to denote the {\bf leading coefficient} of~$f$, and
$\tc(f)$ for its {\bf trailing coefficient}: \ie~$\tc(f) = f(0)$.  Define
the {\bf reverse} of a polynomial: $\bar{f}(x) = x^d f(\frac{1}{x})$
where~$d$ is the degree of~$f$.  We generalize slightly the usual notion of
palindromic: we say that~$f$ is $\pm$-{\bf palindromic} if $f(x) = \pm
\bar{f}(x)$.  We define a {\bf $*$-symmetric factorization} to be one of
the form $f(x) = g(x) \cdot g^*(x)$ where $g^*(x) = \bar{g}(-x)$; here is
an example
\begin{eqnarray*}
f &=& 12x^8 + 2x^7 + 4x^6 - 8x^5 + 5x^4 + 8x^3 + 4x^2 - 2x + 12 \cr
  &=& (3x^4 + 8x^3 + 12x^2 + 10x + 4)(4x^4 - 10x^3 + 12x^2 - 8x + 3)
\end{eqnarray*}

We define the {\bf height} of a polynomial to be the largest absolute value
of a coefficient: \ie~if $f = \sum a_i x^i$ then $\height(f) = \max \{
|a_i| \}$ where, as usual, $|z|$ denotes the absolute value of the complex
number $z \in \CC$.  The height will be our main measure of the size of a
polynomial.  Define the $l_p$-norm of the coefficients of $f = \sum a_i x^i
\in \CC[x]$ to be $|f|_p = \bigl( \sum |a_i|^p \bigr)^{1/p}$.  While we
will occasionally use the $l_2$-norm, we are most interested in the case
$p=\infty$ because $\height(f) = |f|_\infty$.

We define the {\bf ratio} of a factorization $f = g_1 g_2 \cdots g_s \in \CC[x]$ to be
$$
{1 \over \height(f)} \min \{ \height(g_1), \height(g_2),\ldots,\height(g_s) \}
$$
The ratio measures how large the factors are compared to their product.  We
shall be particularly interested in factorizations in $\ZZ[x]$ with a ratio
greater than~$1$.  We concentrate primarily on the case $s=2$, and will
look at the case $s>2$ in section~\ref{many-large-irred-factors}.

We recall {\bf Mahler's measure} $M(f) = |\lc(f)| \prod \max(1, |\alpha_i|)$ the
product being taken over all the complex roots of $f$.  Observe that
Mahler's measure is invariant under reversal, \ie~$M(\bar f) = M(f)$.
Actually computing the value of $M(f)$ seems to be difficult, but a good
approximation can be calculated using the ideas in~\cite{CMP87} and~\cite{DM90}.  In
particular, Mignotte~\cite{Mig74} proved that we can always use $|f|_2$ as an
upper bound for $M(f)$.

%%%%%%%%%%%%%%%%%%%%%%%%%%%%%%%%%%%%%%%%%%%%%%%%%%%%%%%%%%%%%%%%%%%%%%%%%%%%%
%%%%%%%%%%%%%%%%%%%%%%%%%%%%%%%%%%%%%%%%%%%%%%%%%%%%%%%%%%%%%%%%%%%%%%%%%%%%%

\section{Degree Aware Factor Coefficient Bounds}
\label{deg-aware-bounds}

In the Introduction we mentioned how important factor bounds are, so it
comes as no surprise to learn that it is a topic which has already
attracted some attention.  We distinguish two types of bound: the {\it
  degree aware bounds\/} which make use of information (\eg~obtained from
degree analysis) about the possible degrees of factors, and the newer
{\it single factor bounds\/} which apply to at least one factor (not
necessarily irreducible).  It seems that the single factor bounds cannot
exploit knowledge about possible degrees of factors.  In contrast, all
the degree aware bounds are increasing with the degree (at least up to
$\frac{1}{2} \deg(f)$), so each of them can be used as a single factor bound
just by computing the bounds for a factor of degree $\frac{1}{2} \deg(f)$.
We shall look at the single factor bounds in the next section.  In this
section we recall and compare several of the degree aware bounds which have
appeared in the literature.  We also present a minor improvement to two
of the bounds.

\medskip

In this section we shall assume we are given $f \in \ZZ[x]$ and also
$\delta \in \NN$, and the aim is to bound the coefficients of any factor $g
\in \ZZ[x]$ whose degree is at most $\delta$.  The four methods we present
actually produce bounds for the magnitudes of the coefficients of any
factor $g \in \CC[x]$ satisfying a natural scaling hypothesis (described
below).  Previous uses of these bounds generally used each method to
produce just a single number, namely a height bound valid for the whole
of~$g$~---~such an overall height bound is all that is needed to determine
how far one must lift.  In fact each of the bounding methods gives
individual limits for the separate coefficients of~$g$, and we shall
exploit this to improve the binomial bound (sect.~\ref{binomial-bound}) and
the Knuth--Cohen bound (sect.~\ref{knuth-cohen-bound}).  We give examples
to show that none of the bounds is universally superior, \ie~for each of
the bounding methods there are cases where it gives a lower overall height
bound than the others.  In section~\ref{combined-is-best} we shall combine
the individual coefficient bounds to obtain a result better than any one of
the bounds in isolation.

\subsubsection*{The Scaling Hypothesis}

Ideally we want bounds valid only for irreducible factors in $\ZZ[x]$, but
at the moment the only bounds known to us are valid for a much wider class
of factors, namely factors in $\CC[x]$ which are suitably scaled to avoid
problems with scalar factors.  So, even though we are primarily interested
in factorizations in $\ZZ[x]$, we shall be considering polynomials with
complex coefficients in this section.  So, let
$$
f=\sum_{i=0}^d a_i x^i \in \CC[x]
$$
be the polynomial of degree~$d$ whose factors we wish to bound.  For
convenience we shall assume that $a_0 \neq 0$, \ie~$\tc(f) \neq 0$.  The
factor whose coefficients we wish to bound is
$$
g = \sum_{i=0}^\delta b_i x^i \in \CC[x]
$$
We assume the following {\bf scaling hypothesis}: we require that the
factor~$g$ satisfy both $|\lc(g)| \le |\lc(f)|$ and $|\tc(g)| \le
|\tc(f)|$; note that this hypothesis is automatically satisfied if~$f$
and~$g$ are images in $\CC[x]$ of a polynomial and one of its factors in
$\ZZ[x]$.

\subsubsection*{The Reversal Trick}

We now make a simple observation which allows us to improve two of the
bounds below.  Polynomial multiplication and reversal commute: \ie~if
$f=g_1 g_2$ then $\bar{f} = \bar{g}_1 \bar{g}_2$.  Thus a bound for the
coefficient of $x^{\delta -k}$ in a degree $\delta$ factor of $\bar{f}$ is
also valid as a bound for the coefficient of $x^k$ in a factor of $f$.  So
the idea is simply to compute two sets of individual coefficient bounds:
one for a degree $\delta$ factor of $f$, and the other for a degree
$\delta$ factor of $\bar{f}$.  Then we combine them to get potentially
improved bounds for the coefficient of $x^k$ in a degree $\delta$ factor of
$f$.  Curiously, this simple idea does not seem to have been described
before even though it often produces usefully lower height bounds.

\subsection{The Binomial Bound}
\label{binomial-bound}

We shall use the binomial expansion to bound the coefficients of~$g$.  Here
is the basic idea.  Suppose we know a value $\rho$ which is an upper bound
for the magnitude of any complex root of~$f$.  Then we see that the $|b_i|$
are dominated by the corresponding coefficients of the polynomial $|\lc(g)|
(x+\rho)^\delta$.  We do not know what $|\lc(g)|$ is, but by the scaling
hypothesis we have $|\lc(g)| \le |\lc(f)|$.  Thus the $|b_i|$ are surely
dominated by the coefficients of $|\lc(f)|(x+\rho)^\delta$.

We can improve the binomial bound by using the reversal trick.  In detail,
let $\bar \rho$ be a root bound for $\bar f$ then by reasoning analogous
that above we see that $|b_i|$ is dominated by the smaller of the
coefficients of $x^i$ in $|\lc(f)| (x+\rho)^\delta$ and in $|\tc(f)|
(\bar\rho x+1)^\delta$.

\subsubsection{Root Bounds for a Polynomial}

We shall now investigate how to find a good root bound~$\rho$ for~$f$.
Define the perfect root bound $\RB(f) = \max \{ |\alpha| : f(\alpha)=0 \}$;
so clearly we must have $\rho \ge \RB(f)$.  Luckily, finding suitable
values for~$\rho$ is not too hard.  We can take $\rho = C(f)$, the unique
positive root of $\tilde{f}(x) = |a_d| x^d - \sum_{i=0}^{d-1} |a_i|
x^i$ because if we have $f(\alpha)=0$ then $|a_d| |\alpha|^d = |a_d \alpha^d| = |
\sum_{i=0}^{d-1} a_i \alpha^i | \le \sum_{i=0}^{d-1} |a_i| |\alpha|^i$,
hence $\tilde{f}(|\alpha|) \le 0$.  It is reported in~\cite{DM90} that
Cauchy knew of this bound.  In general, $C(f) > \RB(f)$; indeed
in~\cite{DM90} it is shown that $\RB(f) \le C(f) \le \RB(f)\,(2^{1/d}-1)^{-1}
\approx \RB(f) \, d/\log 2$ with both limits being attainable.

Alternatively, if we prefer not to compute $C(f)$, we can obtain a
slightly looser bound using a formula given in~\cite{Zas69}:
$$
Z(f) \quad = \quad
{ 1 \over 2^{1/d}-1 } \max \left\{ \left( {|a_{d-i}| \over |a_d|} {d \choose i}^{-1} \right)^{1/i} \right\}
$$
Another formula was given as exercise~4.6.2--20 in~\cite{Knu69} (curiously,
it does not appear in the second edition~\cite{Knu81}):
$$
\quad K(f) \quad = \quad
2 \max \left\{ \left( {|a_{d-i}| \over |a_d|} \right)^{1/i} : i = 1, 2, \ldots, d \right\}
$$
In fact, these formulas merely give upper bounds for $C(f)$; nevertheless,
as Knuth showed, we know that $K(f)$ cannot exceed $2d \RB(f)$.  In practice,
we can start from the smaller of $Z(f)$ and $K(f)$ then apply a few Newton
iterations to obtain quickly a tighter upper bound for $C(f)$.  The topic of
root bounds has been much studied; some more information can be found in, for
example,~\cite{Wil61} or section~6.2 of~\cite{Yap2000}.

Now, $C(f)$ can be rather larger than $\RB(f)$, the best possible value for
$\rho$.  A simple way to compute better approximations to $\RB(f)$ is given
in~\cite{DM90}.  They use Gr\"affe's transformation repeatedly to obtain
the polynomial $f_s$ whose roots are the $2^s$-th powers of the roots of
$f$, then they use the smaller of $K(f_s)$ and $Z(f_s)$ to bound $C(f_s)$
and finally take the $2^s$-th root of the result.  Choosing $s \approx
3+\log \log d$ is enough to obtain an estimate for $\RB(f)$ within a small
constant factor of optimal.  As we shall be computing powers of~$\rho$ in
the binomial expansion a larger value of~$s$ may be better for us,
in~\cite{DM90} they suggest using the value $s=\max\{3, \log d\}$.

\medskip
\noindent
{\small
{\bf Note}
Although applying Gr\"affe's transformation many times will surely produce a
better estimate, a single application of Gr\"affe's
transformation may lead to larger estimates for the largest root:
\eg~let $f=x^2-2$, then we immediately see that $3/2$ is an upper bound for
$C(f)$; however, $g=x^2-4x+4$ is the Gr\"affe transform of~$f$ and $C(g) =
2+2\sqrt{2} \approx 4.8$ which is considerably larger than $(3/2)^2 = 2.25$.
}

\subsubsection{A refinement of the binomial bound}

It is possible to refine the binomial bound if we have more detailed
information about the roots of~$f$.  Specifically, if we know values
$\rho_1 \ge \rho_2 \ge \cdots \ge \rho_d$ where each satisfies $\rho_i \ge
|\alpha_i|$ for a suitable numbering of the complex roots of~$f$, then the
magnitude of the coefficient of $x^i$ in a factor of degree $\delta$ is
bounded by the coefficient of $x^i$ in the product $\prod_{j=1}^\delta
(x+\rho_j)$.  In the rest of this paper we shall use just the standard form
of the binomial bound.

% \subsubsection{Example: behaviour of the root bounds}

% Here are some examples showing the various possible types of behaviour of
% these root bounds: there are cases when each of $C(f)$, $Z(f)$ and $K(f)$
% is arbitrarily close to the magnitude of the largest root, sometimes $Z(f)$
% is smaller than $K(f)$, sometimes {\it vice versa\/}.  Note that we never
% overestimate by a factor greater than twice the degree of the polynomial.
% For some entries we have approximated $(2^{1/n}-1)^{-1}$ by the value
% $n/\ln 2$ for compactness.  Also the entries $2-2^{-n}$ are only
% approximate.

% $$
% \begin{array}{ccccc}
% f                         & \max |\alpha_i| & C(f)     & Z(f)    & K(f) \\
% x^n-1                     & 1               & 1        & n/\ln 2 & 2    \\
% (x+1)^n                   & 1               & n/\ln 2  & n/\ln 2 & 2n   \\
% x^n - \sum_{i=0}^{n-1} x^i & 2-2^{-n}        & 2-2^{-n} & n/\ln 2 & 2    \\
% 2x^n - (x+1)^n            & n/\ln 2         & n/\ln 2  & n/\ln 2 & 2n   \\
% \end{array}
% $$

\subsection{Mignotte's Bound}

Mignotte~\cite{Mig74} has refined an inequality he ascribed to Mahler to obtain
the following bound on a factor $g$ of degree $\delta$:
$$
|g|_1 \quad \le \quad  2^\delta M(g)
      \quad \le \quad  2^\delta M(f)
      \quad \le \quad  2^\delta |f|_2
$$

In fact, formula (2) in that paper gives an individual bound for each 
coefficient of $g$: namely $|b_i| \le {\delta \choose i} M(f)$.
Mignotte has published some other bounds but the one given here seems most
relevant to our purposes.  The bound is clearly invariant under reversal
(since Mahler's measure is invariant).

\subsection{The Knuth--Cohen Bound}\label{knuth-cohen-bound}

A slight refinement of Mignotte's bound appears as exercise~4.6.2--20
in~\cite{Knu81}; it is also given as Theorem~3.5.1 in~\cite{Coh95}.  This
refined bound is sufficiently different from Mignotte's to merit separate
mention.  The bound is:
$$
|b_i| \le {\delta-1 \choose i} |f|_2 + {\delta-1 \choose i-1} |\lc(f)|
$$
and presumably for $i=0$ we simply take $|b_0| \le |a_0|$.

Like the binomial bound, this bound is not invariant under reversal so we
can improve it by using the reversal trick.  We shall use this improved
version for the comparisons below.

\subsection{Beauzamy's Bound}

In 1992 Beauzamy derived a new bound~\cite{Bea92} from a result of
Bombieri.  Once again he gives individual bounds for each $|b_i|$:
$$
|b_i| \quad \le \quad
\sqrt{{1 \over 2}{\delta \choose i}{d \choose \delta}} \; [f]_2
$$
where $[f]_2 = \sqrt{\sum_{j=0}^d |a_j|^2/{d \choose j}}$ is a weighted
norm derived from Bombieri's norm.  An interesting feature of this norm is
that the central coefficients of~$f$ have particularly small weights, so
polynomials whose central coefficients are larger than the peripheral ones
(as often happens in practice) have a small norm.  The bound is clearly
invariant under reversal (since Bombieri's norm is invariant).

\subsection{Comparison of the Degree Aware Bounds}

Here we give four example polynomials to show that each of the degree aware
bounds can be the best one; then we give a fifth example where combining
the bounds is best.  We produced the examples by generating many random
irreducible polynomials of degree~$4$ then we chose for each bound a
product of two of these polynomials where that bound was better than the
others.  Since we are interested in factors in $\ZZ[x]$ we have rounded
down to integers the values of the bounds in the tables.  It is quite
apparent that the bounds are often very loose.

%see Work/CoCoA/CoCoA4/CoeffBounds.coc

\subsubsection{A case favourable for the binomial bound}

Let us take
%%1.303703704*10^(-1)  for poly (x^4 + 4x^3 + 16x^2 + 9x - 1)*(x^4 + 4x^3 + 15x^2 + 3x - 2)
\begin{eqnarray*}
f &=& (x^4 + 4x^3 + 16x^2 + 9x - 1) (x^4 + 4x^3 + 15x^2 + 3x - 2)\cr
  &=& x^8 + 8x^7 + 47x^6 + 136x^5 + 285x^4 + 171x^3 - 20x^2 - 21x + 2
\end{eqnarray*}
Factorizing~$f$ modulo~$5$ shows that the only possible degree for a true factor is~$4$.
For this polynomial we obtained a good root bound $\rho \approx
3.84$.  Our estimate for Mahler's measure is about~$197$.
And Bombieri's norm is about~$47$.  The coefficient bounds are:
\begin{center}
\begin{tabular}{|l|rrrrr|r|}
\hline
            & $x^4$ & $x^3$ & $x^2$ & $x^1$ & $x^0$ & Overall \\
\hline
Binomial    &    1  &   15  &   88 &   84  &    2  &      88 \\
Mignotte    &  196  &  787  & 1181 &  787  &  196  &    1181 \\
Beauzamy    &  275  &  551  &  675 &  551  &  275  &     675 \\
Knuth--Cohen&    1  &  366  & 1093 &  369  &    2  &    1093 \\
\hline
\end{tabular}
\end{center}
In the table above, and those of the subsections below, we see the
values of the individual coefficient bounds for each method, and also
the resulting overall height bound.  In this instance we observe that
the binomial method produces a considerably smaller overall value
than the other methods, and so is the best in this case.

\subsubsection{A case favourable for Mignotte's bound}

Let us take
%%4.555808656*10^(-1)  for poly (x^4 - 7x^3 + 7x^2 - 8x + 2)*(2x^4 - 2x^3 - 2x^2 + 6x - 5)
\begin{eqnarray*}
f &=& (x^4 - 7x^3 + 7x^2 - 8x + 2) (2x^4 - 2x^3 - 2x^2 + 6x - 5)\cr
  &=& 2x^8 - 16x^7 + 26x^6 - 10x^5 - 41x^4 + 89x^3 - 87x^2 + 52x - 10
\end{eqnarray*}
Factorizing~$f$ modulo~$37$ shows that the only possible degree for a true
factor is~$4$.  We estimate the Mahler measure of~$f$ to be about~$33.4$.  A
good root bound for $f$ is $\rho \approx 6.1$, and for its reverse we get
$\bar\rho \approx 3.2$.  The Bombieri norm of~$f$ is about~$31$.  The
resulting coefficient bounds are:
\begin{center}
\begin{tabular}{|l|rrrrr|r|}
\hline
            & $x^4$ & $x^3$ & $x^2$ & $x^1$ & $x^0$ & Overall \\
\hline
Binomial    &    2  &   48  &  439 &  129  &  10   &     439 \\
Mignotte    &   33  &  133  &  200 &  133  &  33   &     200 \\
Beauzamy    &  180  &  361  &  443 &  361  & 180   &     443 \\
Knuth--Cohen&    2  &  150  &  440 &  174  &  10   &     440 \\
\hline
\end{tabular}
\end{center}

% We note that, while not the best overall, the binomial bound gives the lowest
% limits for all coefficients except that of $x^2$.

\subsubsection{A case favourable for the Knuth--Cohen bound}

Let us take
%%5.548098434*10^(-1)  for poly (x^4 - 8x^3 - 7x^2 - 5x + 5)*(2x^4 - 6x^3 - x^2 + 4x - 5)
\begin{eqnarray*}
f &=& (x^4 - 8x^3 - 7x^2 - 5x + 5) (2x^4 - 6x^3 - x^2 + 4x - 5)\cr
  &=& 2x^8 - 22x^7 + 33x^6 + 44x^5 + 10x^4 - 13x^3 + 10x^2 + 45x - 25
\end{eqnarray*}
Factorizing~$f$ modulo~$11$ shows that the only possible degree for a true
factor is~$4$.  We estimate the Mahler measure of~$f$ to be about $75$.  A
good root bound for~$f$ is $\rho \approx 8.9$, and for its reverse we get
$\bar\rho \approx 2.05$.  The Bombieri norm of~$f$ is about $32$.  The
resulting coefficient bounds are:
\begin{center}
\begin{tabular}{|l|rrrrr|r|}
\hline
            & $x^4$ & $x^3$ & $x^2$ & $x^1$ & $x^0$ & Overall \\
\hline
Binomial    &    2 &    70  &  628 &  204  &  25   &     628 \\
Mignotte    &   74 &   298  &  447 &  298  &  74   &     447 \\
Beauzamy    &  189 &   378  &  463 &  378  & 189   &     463 \\
Knuth--Cohen&    2 &    86  &  248 &  155  &  25   &     248 \\
\hline
\end{tabular}
\end{center}

\subsubsection{A case favourable for Beauzamy's bound}

Let us take
%%6.078184111*10^(-1)  for poly (x^4 + 5x^3 - 14x^2 - x - 3)*(3x^4 + 8x^3 + 15x^2 - 5x - 1)
\begin{eqnarray*}
f &=& (x^4 + 5x^3 - 14x^2 - x - 3) (3x^4 + 8x^3 + 15x^2 - 5x - 1)\cr
  &=& 3x^8 + 23x^7 + 13x^6 - 45x^5 - 253x^4 + 26x^3 - 26x^2 + 16x + 3
\end{eqnarray*}
Factorizing $f$ modulo~$37$ shows that the only possible degree for a true
factor is~$4$.  A good root bound for~$f$ is $\rho \approx 7.0$; for the
reverse we get $\bar\rho \approx 7.0$.  The Mahler measure of~$f$ is about
$259$.  The Bombieri norm of~$f$ is about $33$.  The resulting coefficient
bounds are:
\begin{center}
\begin{tabular}{|l|rrrrr|r|}
\hline
            & $x^4$ & $x^3$ & $x^2$ & $x^1$ & $x^0$ & Overall \\
\hline
Binomial    &    3  &   83  &  880 &   83  &   3   &     880 \\
Mignotte    &  258  & 1035  & 1553 & 1035  & 258   &    1553 \\
Beauzamy    &  197  &  394  &  482 &  394  & 197   &     482 \\
Knuth--Cohen&    3  &  270  &  793 &  270  &   3   &     793 \\
\hline
\end{tabular}
\end{center}

% Again the binomial bound is (equal) best except for the coefficient of $x^2$.

\subsubsection{A case where the combined bound is best}
\label{combined-is-best}

Let us take
%%5.337078652*10^(-1)  for poly (2x^4 + 8x^3 + 10x^2 + 9x - 1)*(x^4 - 3x^3 + 5x^2 - 5)
\begin{eqnarray*}
f &=& (2x^4 + 8x^3 + 10x^2 + 9x - 1) (x^4 - 3x^3 + 5x^2 - 5)\cr
  &=& 2x^8 + 2x^7 - 4x^6 + 19x^5 + 12x^4 + 8x^3 - 55x^2 - 45x + 5
\end{eqnarray*}
Factorizing~$f$ modulo~$61$ shows that the only possible degree for a true
factor is~$4$.  We obtain the following coefficient bounds:
\begin{center}
\begin{tabular}{|l|rrrrr|r|}
\hline
            & $x^4$ & $x^3$ & $x^2$ & $x^1$ & $x^0$ & Overall \\
\hline
Binomial    &    2  &   22 &   95  &  178  &   5   &     178 \\
Mignotte    &   63  &  255 &  382  &  255  &  63   &     382 \\
Beauzamy    &  118  &  236 &  290  &  236  & 118   &     290 \\
Knuth--Cohen&    2  &   81 &  231  &   90  &   5   &     231 \\
\hline
\end{tabular}
\end{center}

If we consider only the overall height bounds produced by each method then
the best we can conclude is that any factor of degree~$4$ can have height
at most $178$.  However, taking together the bounds for each individual
coefficient (\ie~the minimum of each column) we see that no coefficient can
exceed $95$.

\subsection{Preprocessing to get Better Bounds}

From the examples above it is quite clear that often all the bounds are
very loose.  Now, any of the bounds above applied to some multiple of $f$
will naturally be valid also for factors of~$f$ itself.  So we are
interested in finding small height multiples of $f$.  We find that
Theorems~$4$ and~$4'$ in~\cite{Mig88} prove the existence of small height
multiples of polynomials in~$\ZZ[x]$; unfortunately, they do not give a
good way of actually finding them.  One approach is to use LLL lattice
reduction~\cite{LLL82} to look for a multiple of~$f$ with small $l_2$ norm.

Another approach is hinted at in Theorem~B of~\cite{CMP87}: given a
positive degree~$\hat d$ find the unique monic $\hat f_{\hat d} \in \QQ[x]$
of degree~$\hat d$ which minimizes $|f \hat f_{\hat d}|_2$.  The
coefficients of $\hat f_{\hat d}$ are the solutions of a simple linear
system, so we have an efficient and effective method for finding a small
multiple of prescribed degree in $\QQ[x]$.

%  We just multiply~$f$ by a monic polynomial of degree~$\hat d$
% having indeterminate coefficients.  The square of the $l_2$ norm is then
% just a quadratic polynomial in these indeterminates, and we find its
% minimum simply by setting all first derivatives to zero.  This produces a
% linear system in the indeterminates whose solution provides us with the
% coefficients of the polynomial~$\hat f_{\hat d}$.

\medskip

The principal drawback of this preprocessing idea is that only rarely does
it produce better bounds: the reduction in height is not usually sufficient
to offset the increase in degree~---~recall that all the degree aware
bounds grow exponentially with degree.

% In some cases we can transform the factorization into an equivalent smaller task.
% For instance, we can look for a linear transformation $x \mapsto ax+b \in \ZZ[x]$ 
% such that $f(x) = g(ax+b)$ where $g$ has smaller coefficients.  A heuristic
% for this is implemented in CoCoA as the function {\tt LinearSimplify}.

%%%%%%%%%%%%%%%%%%%%%%%%%%%%%%%%%%%%%%%%%%%%%%%%%%%%%%%%%%%%%%%%%%%%%%%%%%%%%
\section{Single Factor Bounds}
\label{single-factor-bounds}

We now turn our attention to the single factor bounds which we mentioned at the start of
section~\ref{deg-aware-bounds}.  Even though our primary interest is in
factors in $\ZZ[x]$, once again we need to consider what happens in
$\CC[x]$.  Let $f \in \CC[x]$ have degree~$d$.  Then a {\bf single factor
  bound} for $f$ is a value $B$ such that for any non-trivial factorization $f = g_1
g_2 \cdots g_s \in \CC[x]$ at least one factor (wlog $g_1$) satisfies
$\height(g_1) \le B$.  As already observed, any of the degree aware bounds
from the previous section can be used as a single factor bound simply by
computing the bounds for a factor of degree $\lfloor d/2 \rfloor$.  Here we
see two other ways of obtaining such bounds.  We start with a new bound
which was latent in an article of Mignotte.

\subsection{Mignotte's Bound}

It is immediate from Theorem~2 in~\cite{Mig74} that in any non-trivial
factorization $f$ must have at least one factor, $g$, satisfying
$$
|g|_1 \quad \le \quad \sqrt{2^d M(f)} 
      \quad \le \quad \sqrt{2^d |f|_2}
$$
though this fact is not exploited there.  For some reason this was overlooked
also in~\cite{BTW93}: they just took the simpler bound from Mignotte's Theorem~2
and applied it to degree $\lfloor d/2 \rfloor$.

In fact, tracing through Mignotte's reasoning one can also obtain another,
more convenient version which bounds directly the sizes of the coefficients
of some factor.  This alternative version is also slightly tighter.  We now
present this new bound.

Suppose that $f = g_1 g_2$ is a non-trivial factorization with $d_1 = \deg(g_1)$ and
$d_2 = \deg(g_2)$.  Clearly $M(f) = M(g_1) M(g_2)$.  Now $|g_1|_\infty \le
{d_1 \choose \lfloor d_1/2 \rfloor} M(g_1)$, and similarly for $g_2$.
Since $d=d_1+d_2$ and both $d_1$ and $d_2$ are strictly positive,
we have ${d_1 \choose \lfloor d_1/2 \rfloor}{d_2 \choose
  \lfloor d_2/2 \rfloor} \le \frac{2}{3}{d \choose \lfloor d/2 \rfloor}$.
Putting it all together we find that
$$
|g_1|_\infty |g_2|_\infty \le \frac{2}{3} {d \choose \lfloor d/2 \rfloor} M(f)
$$
We may assume that $|g_1|_\infty \le |g_2|_\infty$; therefore $|g_1|_\infty
\le \sqrt{\frac{2}{3} {d \choose \lfloor d/2 \rfloor} M(f)}$.  Using
Stirling's approximation we can estimate the binomial coefficient to be
about $2^{d+1}/(2 \pi d)^{1/2}$.  If we want, using Theorem~1
from~\cite{Mig74}, we can replace $M(f)$ by the bound $|f|_2$ to obtain a
closed form.  Computationally it is better to use approximations
from~\cite{CMP87} or~\cite{DM90} to estimate $M(f)$.

\subsection{The BTW Bound}

Another single factor bound was described more recently in~\cite{BTW93}; it is based
on Bombieri's weighted norm.  They show that at least one factor, $g$, must
satisfy
$$
|g|_\infty \le c \sqrt{ 2^d d^{-3/4} \, [f]_2}
$$
for some constant $c < 1.1$ (provided $d = \deg(f) > 2$).  It is
fairly evident that for large~$d$ Mignotte's single factor bound is
greater than BTW if we choose to use $|f|_2$ in place of $M(f)$ since
$|f|_2 \ge [f]_2$.  However, if we compute a good approximation to $M(f)$
then Mignotte's bound can be substantially lower: \eg~consider $f=(x+1)^{2d}$
for which $M(f)=1$ and $[f]_2 = 2^d$, then Mignotte's bound yields $2^d
\bigl( \pi d \bigr)^{-1/4}$ whereas the BTW bound is $c\, 2^{3d/2}
(2d)^{-3/8}$~---~almost 50\% bigger than Mignotte's bound, in logarithmic
terms.

\subsection{Comparison of the Single Factor Bounds}

Here are two examples to show that neither bound is superior to the other.
Obviously the value of Mignotte's bound depends on the quality of the
approximation to $M(f)$ we use: for our computations we used the techniques
in~\cite{CMP87} to obtain a fairly good approximation.  The first example is
%%4.468085106*10^(-1)  for poly (2x^4 + 5x^3 + 7x^2 + 6x + 3)*(3x^4 + 6x^3 + 7x^2 + 5x + 2)
%%3.934426230*10^(-1)  for poly (3x^4 + 7x^3 + 9x^2 + 7x + 3)*(4x^4 + 7x^3 + 9x^2 + 7x + 4)
\begin{eqnarray*}
  f&=& 6x^8 + 27x^7 + 65x^6 + 105x^5 + 123x^4 + 105x^3 + 65x^2 + 27x + 6\cr
   &=& (2x^4 + 5x^3 + 7x^2 + 6x + 3) (3x^4 + 6x^3 + 7x^2 + 5x + 2)
\end{eqnarray*}
where we find that Mignotte's bound is $21$ whereas the BTW bound is $47$.  Conversely, if we look at the example
%%4.400000000*10^(-1)  for poly (x^4 + 2x^3 + 2x^2 - 2x + 1)*(x^4 - 2x^3 + 2x^2 + 2x + 1)
%%4.038461538*10^(-1)  for poly (x^4 - 4x^3 + 5x^2 + 4x + 1)*(x^4 + 4x^3 + 5x^2 - 4x + 1)
%%4.017857143*10^(-1)  for poly (x^4 - 6x^3 + 14x^2 + 6x + 1)*(x^4 + 6x^3 + 14x^2 - 6x + 1)
\begin{eqnarray*}
  f&=& x^8 - 6x^6 + 59x^4 - 6x^2 + 1\cr
   &=& (x^4 - 4x^3 + 5x^2 + 4x + 1) (x^4 + 4x^3 + 5x^2 - 4x + 1)
\end{eqnarray*}
we find that the BTW bound is $21$ whereas Mignotte's is $52$.

Once again, since neither bound is always better than the other, it is
worth computing both, and taking the smaller result.  In the next
section we compare the single factor bounds with the degree aware ones.

\section{Single Factor {\it vs.} Degree Aware Bounds}\label{single-vs-deg-aware}

In this section we consider the question {\it Which are better: single
  factor bounds or degree aware bounds?}  The answer is {\it Neither.}  We
shall see that the best answer is always to calculate both and pick
whichever is the smaller.  For simplicity we shall restrict attention to
short factorizations: $f = g_1 g_2$.

One situation apparently favourable for the degree aware bounds is when we
know that one true factor must have particularly low degree
(\ie~considerably smaller than $d/2$)~---~recall that we might be able to
learn this from degree analysis.  Conversely, a situation apparently
favourable for the single factor bounds is when both factors could have
high degree (\ie~close to $d/2$).  We give two examples of both types: one
where the single factor bounds are better and one where the degree aware
bounds are better.

\subsection{Factorizations with a Low Degree Factor}

We present two example short factorizations both with factors of
degree~$2$ and~$18$.  In one case we see that the degree aware bounds
are much smaller than the single factor bounds, in the other case we
see the opposite.  For convenience, these two examples have a special
structure: the smaller factor $g_1$ is quadratic, the larger factor
$g_2 = g_1^9+x^8$, which can be viewed as a ``small perturbation'' of
$g_1^9$.

Taking $g_1 = x^2+5x+9$ we obtain a polynomial $f$ whose factorization
modulo $151$ tells us that the only degree pattern for true factors is
$2+18$.  A good root bound for $f$ is about $3.9$, and the binomial bound
for a degree~$2$ factor tells us that the maximum coefficient cannot
exceed~$15$; the other degree aware bounds are larger.  In contrast,
Mignotte's single factor bound is larger than $2 \times 10^7$; the BTW
bound is larger still.

Taking instead $g_1 = 8x^2+9x+9$ we obtain a polynomial $f$ whose
factorization modulo $61$ tells us that the only degree pattern for true
factors is $2+18$.  A good root bound for $f$ is about $1.3$, and the
binomial bound for a degree~$2$ factor tells us that the maximum
coefficient cannot exceed about~$2.8 \times 10^9$; the other degree aware
bounds are even larger.  In contrast, Mignotte's single factor bound is
less than $2.7 \times 10^7$; the BTW bound is larger.

% In this first example we show how knowledge that a true factor must be of
% low degree lets us obtain better values from a degree aware bound than from
% a single factor bound.  We constructed the example using a method that
% often seems to work well.  First we chose the low degree factor, $g_1$, in
% this case an irreducble quadratic.  Then we set the high degree factor $g_2
% = g_1^k + \varepsilon$ where $\varepsilon$ is a small polynomial chosen to make
% $g_2$ irreducible~---~in this case we took $k=3$ and $\varepsilon=x^2$.  Not
% all choices of $g_1$ produce a suitable example, but many do.
% %% This is an example found using a random search:
% %%1.363636364*10^(-1)  for poly (x^2 + 2x + 5) (x^6 - 3x^5 + 3x^4 + 7x^3 - 6x^2 + 7x + 5)
% \begin{eqnarray*}
%   f&=&x^8 + 20x^6 + 151x^4 + 505x^2 + 625\cr
%    &=&(x^2 + 5) (x^6 + 15x^4 + 76x^2 + 125)
% \end{eqnarray*}
% Factorizing $f$ modulo~$11$ shows that the only possible degrees for true
% factors are~$2$ and~$6$~---~we are interested in bounding the lower degree factor.
% We find $\rho \approx 2.62$ is a good root bound for $f$, and the
% binomial bound tells us that any factor of degree~$2$ has height at
% most~$6$.  In contrast the single factor bounds are rather larger:
% Mignotte's bound gives~$170$ while the BTW bound produces~$202$.

\subsubsection{A Family having Large Single Factor Bounds}

We can, with very high probability, construct examples for which the degree
aware binomial bound is arbitrarily many times smaller than either single
factor bound.  The idea is simple: we construct a polynomial which has two
irreducible factors, one of very low degree and one of high degree.  The
degree aware bounds have the advantage of being able to use the information
that there is a very low degree factor.

Consider the polynomial $f = (x+1) (x^{d-1} - 10^{d-1} +
\varepsilon)$ where $\varepsilon$ is a polynomial of degree less than
$d-1$ having small coefficients and such that~$f$ has only two irreducible
factors over $\ZZ$.  It seems quite easy to find such~$\varepsilon$ for which
degree analysis proves that the only possible degrees for true factors
are~$1$ and~$d-1$.  By choosing $d$ sufficiently large (and $\varepsilon$
sufficiently small), Knuth's root bound for~$f$ yields a value of
about~$20$, so a good root bound will surely be no larger than this.  Thus
the binomial bound for the height of a factor of degree~$1$ is at most~$20$
whereas the BTW single factor bound is greater than $20^{(d-1)/2}
d^{-3/8}$, and Mignotte's single factor bound is greater than $20^{(d-1)/2}
\bigl(2 \pi d \bigr)^{-1/4}$ (since $M(f) \ge 10^{d-1}$).  In other words
the binomial bound is about $(d-1)/2$ times smaller (in a logarithmic
sense) than either of the single factor bounds.

\subsection{Factorizations with High Degree Factors}

% Some examples in lower degrees
% Best so far in deg 10 is 0.407 STILL RUNNING on point!!!
% 4.072081764*10^(-1)  for poly (x^10 + x^9 + x^8 + x^7 + x^6 + x^5 + x^4 + 2x^3 + 4x^2 + 4x + 5)*(x^10 + x^9 + 2x^8 + 2x^7 + 3x^6 + 4x^5 + 4x^4 + 4x^3 + 5x^2 + 5x + 5)
% 8.185840708*10^(-1)  for poly (x^9 - x^6 + 1)*(x^9 - x^3 + 1)
% 3.974895397*10^(-1)  for poly (x^8 + x^7 + x^6 + x^5 + x^4 + x^3 + x^2 + 2x + 5)*(x^8 + x^7 + x^6 + x^5 + x^4 + 2x^3 + 3x^2 + 3x + 5)
% 8.030303030*10^(-1)  for poly (x^7 + x^6 + x^3 - x - 1)*(x^7 + x^6 - x^4 - x - 1)
% 4.193548387*10^(-1)  for poly (x^6 + x^5 + x^4 + x^3 + x^2 + 2x + 5)*(x^6 + x^5 + x^4 + x^3 + 2x^2 + 3x + 5)
% 4.687500000*10^(-1)  for poly (x^5 + x^4 + x^3 + x^2 + 3x + 5)*(x^5 + x^4 + 2x^3 + 3x^2 + 3x + 5)
% 4.411764706*10^(-1)  for poly (x^4 + x^3 + x^2 + 2x + 5)*(x^4 + x^3 + 2x^2 + 2x + 5)
% Below is a much better example, but it has larger factors...
% 3.023255814*10^(-1)  for poly (x^4 + x^3 + x^2 + 4x + 19)*(x^4 + x^3 + 4x^2 + 5x + 19)

If degree analysis does not exclude the existence of high degree true
factors (\ie~of degree close to $d/2$) then the single factor bounds are
often better than the degree aware bounds.  Nonetheless, it is not hard to
find cases where the degree aware bounds are tighter than the single factor
bounds even in these ``unfavourable circumstances''.  For instance, let us
take the irreducible factors
\begin{eqnarray*}
g_1 &=& x^{10} + x^9 + x^8 + x^7 + x^6 + x^5 + x^4 + 2x^3 + 4x^2 + 4x + 5 \cr
g_2 &=& x^{10} + x^9 + 2x^8 + 2x^7 + 3x^6 + 4x^5 + 4x^4 + 4x^3 + 5x^2 + 5x + 5
\end{eqnarray*}
Then for the product $f=g_1 g_2$ we see from the factorization modulo~$463$
that the only possible degree for a true factor is~$10$.  A good root bound
for $f$ is $1.24$, and the binomial bound for a degree~$10$ factor is thus
$757$.  The single factor bounds are rather larger: Mignotte's bound gives
$1859$, and the BTW bound is a little larger at $1920$.

\medskip
We now present a case where the single factor bounds are significantly smaller
than the degree aware bounds.  We take as irreducible factors
% 4.363792154*10^(-2)  for poly (x^10 + 5x^9 - 5x^8 - 3x^7 + 5x^6 - 5x^5 - 2x^4 - 4x^3 - 5x^2 + x + 3)*(-x^10 - x^9 + 3x^7 + x^6 - 5x^5 - x^4 + 4x^3 - 5x^2 + 1)
\begin{eqnarray*}
g_1 &=& x^{10} + 5x^9 - 5x^8 - 3x^7 + 5x^6 - 5x^5 - 2x^4 - 4x^3 - 5x^2 + x + 3 \cr
g_2 &=& x^{10} + x^9 - 3x^7 - x^6 + 5x^5 + x^4 - 4x^3 + 5x^2 - 1
\end{eqnarray*}
Then for the product $f=g_1 g_2$ we see from the factorization modulo~$587$
that the only possible degree for a true factor is~$10$.  The best degree
aware bound is Knuth--Cohen which gives~$16339$.  The single factor bounds
are rather smaller: Mignotte's bound gives $3071$, and the BTW bound is
smaller still at $713$.

%%%%%%%%%%%%%%%%%%%%%%%%%%%%%%%%%%%%%%%%%%%%%%%%%%%%%%%%%%%%%%%%%%%%%%%%%%%%%
%%%%%%%%%%%%%%%%%%%%%%%%%%%%%%%%%%%%%%%%%%%%%%%%%%%%%%%%%%%%%%%%%%%%%%%%%%%%%
\section{Polynomials in $\ZZ[x]$ having a large factor}
\label{one-large-factor}

Looking at the examples in
sections~\ref{deg-aware-bounds},~\ref{single-factor-bounds}
and~\ref{single-vs-deg-aware}, it is quite clear that the factor bounds are
often very loose.  Indeed, based on experience, it is natural to conjecture
that the factors must have height no greater than that of the polynomial
they divide.  Remarkably, this is false: there are cyclotomic polynomials
of arbitrarily great height~\cite{Vau74}, yet they each divide a polynomial
of the form $x^d-1$ (for some~$d$ depending on the polynomial).

In this section we are interested in polynomials which have at least one
large height factor.  The existence of these polynomials having a particularly
large factor helps explain why the degree aware bounds from
section~\ref{deg-aware-bounds} have to be so generous: those bounds must be
large enough to accommodate these unusually large factors.  In contrast,
these factorizations do not have similar implications for the single factor
bounds of section~\ref{single-factor-bounds}~---~in
section~\ref{high-ratio} we will consider large ratio factorizations which
do force also the single factor bounds to be large.

\medskip

Our main interest is in irreducible factors in $\ZZ[x]$; and in the context
of polynomial factorization we can further restrict to irreducible factors
of primitive square-free polynomials.  However, the known bounds apply to
any factor of any polynomial in $\CC[x]$ under the scaling hypothesis
mentioned at the start of section~\ref{deg-aware-bounds}.  As we shall
see, this wider applicability forces the bounds to be much larger than
necessary for irreducible factors in $\ZZ[x]$.

It appears to be very difficult to devise general bounds valid only for
factors in $\ZZ[x]$~---~or ideally, irreducible factors in $\ZZ[x]$.  It is
even unclear how to devise a bound valid only for factors of a primitive
square-free polynomial in $\ZZ[x]$.  Yet the examples below indicate that
such specialized bounds could be significantly tighter than the current
ones we know.

\subsection{Large Factors of $x^d-1$}\label{large-factors-xd-1}

In this subsection we look at how large factors of $x^d-1$ can be.
We chose to consider this family of polynomials for several reasons:
the family depends on a single parameter (namely the degree~$d$),
there are already many interesting theoretical results, and the
limited nature of the family permits computational experimentation
up to moderately high degree.

\subsubsection{Large height irreducible factors of $x^d-1 \in \ZZ[x]$}

The irreducible factors of $x^d-1$ in $\ZZ[x]$ are called {\bf
  cyclotomic polynomials}.  They enjoy numerous special properties; we
recall just one of them: $x^d-1 = \prod_{n|d} \phi_n(x)$ where we
write $\phi_n(x)$ to denote the $n$-th cyclotomic polynomial.  The
first few cyclotomic polynomials all have height~$1$, but as we reach
higher indices we find examples of greater height.  Here is a table of
heights of certain cyclotomic polynomials: these are successive
maximums (up to index 100000).  We used version~4.7.4 of the CoCoA
system~\cite{CoCoA} for the computations.

\begin{center}
\label{tab:cycloheightirred}
\begin{tabular}{|r|r|l|}
\hline
$\height(\phi_d)$  &  $d$ & Factorization of $d$ \\
\hline
     2     &       105 & $3 \cdot 5 \cdot 7                   $ \\
     3     &       385 & $5 \cdot 7 \cdot 11                  $ \\
     4     &      1365 & $3 \cdot 5 \cdot 7 \cdot 13          $ \\
     5     &      1785 & $3 \cdot 5 \cdot 7 \cdot 17          $ \\
     6     &      2805 & $3 \cdot 5 \cdot 11 \cdot 17         $ \\
     7     &      3135 & $3 \cdot 5 \cdot 11 \cdot 19         $ \\
     9     &      6545 & $5 \cdot 7 \cdot 11 \cdot 17         $ \\
    14     &     10465 & $5 \cdot 7 \cdot 13 \cdot 23         $ \\
    23     &     11305 & $5 \cdot 7 \cdot 17 \cdot 19         $ \\
    25     &     17255 & $5 \cdot 7 \cdot 17 \cdot 29         $ \\
    27     &     20615 & $5 \cdot 7 \cdot 19 \cdot 31         $ \\
    59     &     26565 & $3 \cdot 5 \cdot 7 \cdot 11 \cdot 23 $ \\
   359     &     40755 & $3 \cdot 5 \cdot 11 \cdot 13 \cdot 19$ \\
\hline
\end{tabular}
\end{center}

We notice that one needs to go to very high degrees to obtain large
coefficients.  These cyclotomic polynomials have much smaller heights than
allowed by the factor bounds: \eg~even for the smallest case in the table,
the best bound we get for a degree~$48$ factor of $x^{105}-1$ is ${48
  \choose 24} \approx 3\times 10^{13}$ which is far larger than the actual
height.  In~\cite{Vau74} Vaughan showed that the asymptotic growth of
$\height(\phi_d)$ for certain~$d$ increases as $\exp(d^{(\log
  2+o(1))/\log\log d})$.

% Here are some further spot values; note that these are not successive maxima.
% \begin{tabular}{ll}
%  Height  &     Index \\
%    532   &     $255255 = 3 \cdot 5 \cdot 7 \cdot 11 \cdot 13 \cdot 17$ \\
%   1182   &     $285285 = 3 \cdot 5 \cdot 7 \cdot 11 \cdot 13 \cdot 19$ \\
%   1311   &     $345345 = 3 \cdot 5 \cdot 7 \cdot 11 \cdot 13 \cdot 23$ \\
%   5477   &     $373065 = 3 \cdot 5 \cdot 7 \cdot 11 \cdot 17 \cdot 19$ \\
%   2006   &     $435435 = 3 \cdot 5 \cdot 7 \cdot 11 \cdot 13 \cdot 29$ \\
%   2210   &     $440895 = 3 \cdot 5 \cdot 7 \cdot 13 \cdot 17 \cdot 19$ \\
%    618   &     $451605 = 3 \cdot 5 \cdot 7 \cdot 11 \cdot 17 \cdot 23$ \\
%   1123   &     $465465 = 3 \cdot 5 \cdot 7 \cdot 11 \cdot 13 \cdot 31$ \\
%    783   &     $504735 = 3 \cdot 5 \cdot 7 \cdot 11 \cdot 19 \cdot 23$ \\
% 669606   &    $4849845 = 3 \cdot 5 \cdot 7 \cdot 11 \cdot 13 \cdot 17 \cdot 19$ \\
% \end{tabular}

\subsubsection{Large height reducible factors of $x^d-1$}
%% see BigCycloFac.coc

Recalling that the bounds of section~\ref{deg-aware-bounds} do not distinguish between reducible and
irreducible factors, we now look for large factors of $x^d-1$ including
reducible ones.  Here is a table of successive maximums of the greatest
height factor in $\ZZ[x]$ of $x^d-1$ for~$d$ up to~$719$.  The values were
computed directly: factorize $x^d-1$ then try all possible products of the
irreducible factors.  We did not attempt $d=720$ as $x^{720}-1$ has~$30$
factors, and it would take a long time to generate and test all $2^{30}$
factors.  These examples were computed by a dedicated C++ program which
relied upon version 0.99 of CoCoALib~\cite{CoCoA}.

\begin{center}
\label{tab:cycloheight-reducible}
\begin{tabular}{|r|r|l|}
\hline
Factor Height & $d$ & Factor \\
\hline
    $3\quad$  &    12  & $\phi_1 \phi_4 \phi_6$ \\ %% deg=5
    $4\quad$  &    20  & $\phi_2 \phi_4 \phi_5$ \\ %% deg=7
   $12\quad$  &    30  & $\phi_1 \phi_6 \phi_{10} \phi_{15}$ \\ %% deg=15
%%      26   &    60  & $\phi_1 \phi_4 \phi_6 \phi_{10} \phi_{60}$ \\ %% deg=
   $54\quad$  &    60  & $\phi_1 \phi_4 \phi_6 \phi_{10} \phi_{15} \phi_{60}$ \\ % deg= 33
%%      32   &    70  & $\phi_1 \phi_{10} \phi_{14} \phi_{35}$ \\
   $55\quad$  &    84  & $\phi_1 \phi_4 \phi_6 \phi_{14} \phi_{21} \phi_{84}$ \\ %% deg=47
   $58\quad$  &    90  & $\phi_2 \phi_3 \phi_5 \phi_{18} \phi_{30} \phi_{45}$ \\ %% deg=45
   $72\quad$  &   105  & $\phi_3 \phi_5 \phi_7 \phi_{105}$ \\ %% deg=60
  $192\quad$  &   120  & $\phi_2 \phi_3 \phi_4 \phi_5 \phi_{24} \phi_{30} \phi_{40} \phi_{60}$ \\ %%deg=57
%%     358   &   180  & $\phi_1 \phi_4 \phi_6 \phi_{10} \phi_{15} \phi_{18} \phi_{36} \phi_{45} \phi_{60}$ \\
  $475\quad$  &   180  & $\phi_{1} \phi_{4} \phi_{6} \phi_{10} \phi_{15} \phi_{18} \phi_{36} \phi_{45} \phi_{60} \phi_{90}$ \\ %% deg=99
$10188\quad$  &   210  & $\phi_1 \phi_6 \phi_{10} \phi_{14} \phi_{15} \phi_{21} \phi_{35} \phi_{210}$ \\ %%deg=105
$395796\quad$ &   420  & $\phi_1 \phi_4 \phi_6 \phi_{10} \phi_{14} \phi_{15} \phi_{21} \phi_{35} \phi_{60} \phi_{84} \phi_{140} \phi_{210} $ \\ %%deg=195
$396660\quad$ &   630  & $\phi_{2} \phi_{3} \phi_{5} \phi_{7} \phi_{9} \phi_{30} \phi_{42} \phi_{45} \phi_{70} \phi_{90} \phi_{105} \phi_{126} \phi_{315}$ \\ %% deg =339
\hline
\end{tabular}
\end{center}

Note how the factor height increases with index $d$ much more rapidly than
it did in the previous subsection where we measured the largest {\em
  irreducible} factor.  Nevertheless these highest factors still lie well
within the limits permitted by the factor bounds: for instance the best
bound for a degree~$57$ factor of $x^{120}-1$ is ${57 \choose 28} \approx
1.5 \times 10^{16}$~---~far larger than the observed value of~$192$.

\medskip
\noindent
{\small
{\bf Note}
In~\cite{PR07} Theorems~$4.1$ and~$5.3$ state that the asymptotic growth of
the greatest height of a factor of $x^d-1$ for certain exponents~$d$ (with many
factors) increases as $\exp(d^{(\log 3+o(1))/\log\log d})$.  This
impressive result is, however, relevant only for very large degrees
well outside the realm of practical polynomial factorization.
}

%%%%%%%%%%%%%%%%%%%%%%%%%%%%%%%%%%%%%%%%%%%%%%%%%%%%%%%%%%%%%%%%%%%%%%%%%%%%%%%%%%%%%%%%%%%%%%%%%%%

\subsubsection{Large factors of $x^d-1$ in $\RR[x]$ and $\CC[x]$}
\label{xd-1-large-RR-factor}

Recalling that the bounds in section~\ref{deg-aware-bounds} on the heights
do, in fact, apply to all factors in $\CC[x]$ which satisfy the scaling
hypothesis, we now consider such factors.  We shall see that much greater
heights can be achieved compared to what we found when looking at factors
in $\ZZ[x]$.  Our study is helped greatly by the particularly simple
factorization: $x^d-1 = \prod_{k=0}^{d-1} (x-\zeta^k) \in \CC[x]$ where
$\zeta$ is a primitive $d$-th root of unity.

Upon examining all possible factors in $\CC[x]$ for small values of~$d$, we
quickly found a simple characterization of a factor of greatest height.
There are three separate cases depending on the value of $d$ modulo $3$:
namely $d=3t$, $d=3t+1$ and $d=3t-1$.  In the
cases $d=3t$ and $d=3t-1$ a factor of greatest height is given by $g_d=
\prod_{k=1-t}^{t-1}(x-\zeta^k)$; moreover, by symmetry we see that $g_d \in
\RR[x]$.  In contrast, the case $n=3t+1$ is different: a best factor is
$h_d = g_d(x) \cdot (x-\zeta^t) \not \in \RR[x]$ and there is no best
factor which is real.  Nevertheless also in the case $d=3t+1$ we see that
$g_d$ continues to be a real factor of greatest height, and in fact it is
not much smaller than $h_d$: empirically we observe that $\height(g_d) /
\height(h_d) \ge 1/\sqrt{2}$ always, and that $\height(g_d)/\height(h_d)
\rightarrow 1$ as~$d$ increases.

Empirically, based on computed values up to $d=100$ (or equivalently
$t=33$), we find that $\height(g_d) \approx 0.15 \times 1.37^d$ or
equivalently $\log(\height(g_d)) \approx 0.316d - 1.9$.  For a degree
$\delta = \lfloor (2d+1)/3 \rfloor$ factor the best bounds are the binomial
bound (with $\rho=1$) or Mignotte's bound (with $M(f)=1$): they allow
height up to ${\delta \choose \lfloor \delta/2 \rfloor}$ which we can
approximate using Stirling's formula to obtain $2^\delta/\sqrt{\pi
  \delta/2} \sim 1.58^d/\sqrt{d}$ for large~$d$.  Here we see that the
bounds are too large by roughly 42\% in logarithmic terms.

% \medskip
% \noindent
% Note: in section~\ref{xd-1-large-ratio} we look at large ratio
% factorizations of $x^d-1$ in $\RR[x]$.

%%%%%%%%%%%%%%%%%%%%%%%%%%%%%%%%%%%%%%%%%%%%%%%%%%%%%%%%%%%%%%%%%%%%%%%%%%%%%
\subsection{Factors of Polynomials of Height 1 in $\ZZ[x]$}

In the preceding section we looked at the family $x^d-1$ which grows only
slowly with degree: in each degree there is precisely one polynomial to
consider.  Here we consider all polynomials of height~$1$ and given degree.
This family of polynomials increases exponentially with degree: there are
$4 \times 3^{d-2} $ cases to consider in degree $d$ after excluding some
obvious symmetries and multiples of $x$.  We restricted attention to
height~$1$ because allowing greater heights would result in even faster
exponential growth wih degree.  As a consequence our experimental results
extend only to moderately high degrees.

The next two subsections below look at factors in $\ZZ[x]$: first only the
irreducible factors, then all the factors.  We did not investigate factors
in $\CC[x]$ since the CoCoALib library does not (yet) have the ability to
compute approximate roots in $\CC$.

In the last subsection, we mention briefly the special case of height~$1$
multiples of powers of $x+1$, a topic which has already attracted the
attention of several authors.

%%%%%%%%%%%%%%%%%%%%%%%%%%%%%%%%%%%%%%%%%%%%%%%%%%%%%%%%%%%%%%%%%%%%%%%%%%%%%
\subsubsection{Large height irreducible factors}

In this subsection we are interested in polynomials of height~$1$ having a
large irreducible factor in $\ZZ[x]$.  We note immediately that the
greatest height in a given degree grows much faster than it did for cyclotomic
polynomials.

The entries in this table were found using an {\it ad hoc\/} program
written in C++ which relied upon CoCoALib for polynomial arithmetic and
factorization.  The approach was ``brute force search'', \ie~we factorized
each polynomial, and looked for the greatest height irreducible factor.
For compactness, in the table we represent each polynomial by the list of
its coefficients.

{\small
\begin{center}
\label{tab:height1-irred}
\begin{tabular}{|r|r|l|}
\hline
     Deg   &  Height & Largest Irreducible Factor \& Height 1 Polynomial \\
\hline
       4   &     2  & $[1,2,1,1]$ Unique \\ %%% UNIQUE
           &        & $[1,1,-1,0,-1]$ \\ 
       5   &     2  & $[1,2,1,2,1]$ \\
           &        & $[1,1,-1,1,-1,-1]$ \\             %%% +- PALIN
       6   &     3  & $[1,2,3,2,2,1]$ Unique \\ %%% UNIQUE
           &        & $[1,1,1,-1,0,-1,-1]$ \\ 
       7   &     3  & $[1,2,2,3,2,2,1]$ \\
           &        & $[1,1,0,1,-1,0,-1,-1]$ \\         %%% +- PALIN
           &        & $[1,2,3,2,3,2,1]$ \\
           &        & $[1,1,1,-1,1,-1,-1,-1]$ \\        %%% +- PALIN (2nd example)
       8   &     4  & $[1,1,2,3,4,3,2,1]$ \\
           &        & $[1,0,1,1,1,-1,-1,-1,-1]$ \\ 
       9   &     4  & $[1,2,3,4,3,4,3,2,1]$ \\
           &        & $[1,1,1,1,-1,1,-1,-1,-1,-1]$ \\   %%% +- PALIN
      10   &     5  & $[1,3,4,5,5,5,4,3,1]$ \\
           &        & $[1,1,-1,0,-1,0,-1,0,-1,1,1]$ \\  %%% +-PALIN
           &        & $[1,2,4,5,5,5,4,2,1]$ \\
           &        & $[1,0,1,-1,-1,0,-1,-1,1,0,1]$ \\  %%% PALINDROMIC
      11   &     7  & $[1,2,4,6,7,7,6,4,3,1]$ \\
           &        & $[1, 0, 1, 0, -1, -1, -1, -1, 1, -1, 1, 1]$ \\ 
      12   &     9  & $[1, 3, 5, 6, 8, 9, 9, 8, 6, 3, 1]$ Unique \\  %%% UNIQUE
           &        & $[1, 1, 0, -1, 1, -1, -1, -1, -1, -1, 1, 1, 1]$ \\ 
      13   &     9  & $[1, 2, 3, 5, 6, 8, 9, 9, 8, 6, 3, 1]$ \\
           &        & $[1, 0, 0, 1, -1, 1, -1, -1, -1, -1, -1, 1, 1, 1]$ \\
      14   &    11  & $[1, 3, 6, 8, 10, 11, 11, 11, 10, 8, 6, 3, 1]$ \\
           &        & $[1, 1, 1, -1, 0, -1, -1, 0, -1, -1, 0, -1, 1, 1, 1]$ \\ %%% PALINDROMIC
      15   &    14  & $[1, 3, 6, 9, 11, 13, 14, 14, 13, 11, 8, 6, 3, 1]$ Unique \\  %% UNIQUE
           &        & $[1, 1, 1, 0, -1, 0, -1, -1, -1, -1, -1, 1, -1, 1, 1, 1]$ \\
      16   &    16  & $[1, 3, 6, 8, 11, 13, 15, 16, 16, 15, 13, 10, 6, 3, 1]$ Unique \\ %%% UNIQUE
           &        & $[1, 1, 1, -1, 1, -1, 0, -1, -1, -1, -1, -1, -1, 1, 1, 1, 1]$\\
      17   &    17  & $[1,3,6,9,12,14,16,17,16,14,12,9,6,3,1]$ \\
           &        & $[1,0,0,-1,0,-1,1,-1,-1,1,1,-1,1,0,1,0,0,-1]$ \\   %%% +-PALIN
      18   &    25  & $[1,3,6,9,13,17,21,24,25,24,22,18,13,8,4,1]$ \\
           &        & $[1,0,0,-1,1,-1,0,-1,-1,0,1,-1,1,1,1,0,1,-1,-1]$ \\
      19   &    33  & $[1, 3, 7, 12, 18, 24, 29, 32, 33, 31, 27, 22, 17, 12, 8, 4, 1]$ Unique \\  %% UNIQUE
           &        & $[1, 0, 1, -1, 0, -1, -1, -1, 0, -1, 1, 1, 1, 0, 1, -1, 1, 1, -1, -1]$ \\
      20   &    39  & $[1, 4, 8, 14, 21, 28, 34, 38, 39, 38, 35, 31, 26, 21, 15, 9, 4, 1]$ \\
           &        & $[1, 1, -1, 1, -1, -1, -1, -1, -1, 1, 0, 1, 0, 1, -1, 1, 1, 1, 0, -1, -1]$ \\
%%New max height = 39  for fac=x^17 +4*x^16 +8*x^15 +14*x^14 +21*x^13 +28*x^12 +34*x^11 +38*x^10 +39*x^9 +38*x^8 +34*x^7 +28*x^6 +21*x^5 +14*x^4 +8*x^3 +4*x^2 +2*x +1  factor of x^20 +x^19 -x^18 +x^17 -x^16 -x^15 -x^14 -x^13 -x^12 +x^11 -x^10 +x^9 +x^8 +x^7 +x^6 +x^5 -x^3 -x^2 +x -1  NOTICE THAT THE BIG FACTOR IS RATHER FAR FROM PALINDOMIC!!!
      21   &    43  & $[1, 4, 8, 14, 21, 28, 34, 39, 42, 43, 41, 37, 32, 27, 21, 15, 9, 4, 1]$ \\
           &        & $[1, 1, -1, 1, -1, -1, -1, 0, -1, 0, -1, 1, 1, 1, -1, 1, 0, 1, 1, 0, -1, -1]$ \\ %% UNIQUE
\hline
\end{tabular}
\end{center}
} %% end small

Here are a few observations on the values reported in the table.  In every
degree there was a largest factor having all positive coefficients.  Except
for degrees~$5$ and~$9$ the factors exhibit coefficients which increase
weakly to a maximum value and then decrease weakly again.  Several of these
extremal examples exhibit $\pm$-palindromic symmetry; in degrees~$7$
and~$10$ two palindromic maximal height factors were found, so we listed
both cases in the table.  In degrees~$4$, $6$, $12$, $15$, $16$, $19$
and~$21$ the maximal height factor was essentially unique (up to reversal
and mapping $x \mapsto -x$).  In degrees~$4$ to~$9$ the cofactor is $x-1$;
in degrees~$10$ to~$16$ the cofactor is $(x-1)^2$; and in degrees~$17$
to~$21$ the cofactor is $(x-1)^3$.

From this small table it appears that the heights of the largest
irreducible factor grow approximately as $0.7 \times 1.22^d$; however, since
the table is rather small, extrapolation from this data is perhaps rather
hazardous.

%%%%%%%%%%%%%%%%%%%%%%%%%%%%%%%%%%%%%%%%%%%%%%%%%%%%%%%
\subsubsection{Large Height Reducible Factors}
\label{ht1-reducible}

In this subsection we are interested in polynomials of height~$1$ having a
large factor in $\ZZ[x]$.  We note that the greatest height in
a given degree appears to grow a little faster than it did for irreducible factors.

The table below summarises the results obtained.  The entries in this table
were found using an {\it ad hoc\/} program written in C++ which relied upon
CoCoALib for polynomial arithmetic and factorization.  The approach was
``brute force search'': for each polynomial we computed all possible
products of its irreducible factors, and measured their heights.

{\small
\begin{center}
\label{tab:height1-reducible-part1}
\begin{tabular}{|r|r|l|}
\hline
     Deg   & Height & Largest Factor \& Height 1 Polynomial \\
\hline
       3   &     2  & $[1,2,1]$ \\  %% $\phi_2^2$
           &        & $[1,1,-1,-1]$ \\ 
       4   &     2  & $[1,2,1]$ \\  %% $\phi_2^2$
           &        & $[1,1,0,1,1]$ \\
       5   &     3  & $[1,2,3,2,1]$ \\  %% $\phi_3^2$
           &        & $[1,1,1,-1,-1,-1]$ \\ 
       6   &     2  & $[1,2,1]$ \\  %% $\phi_2^2$
           &        & $[1,0,-1,0,-1,0,1]$ \\
       7   &     4  & $[1,2,3,4,3,2,1]$ \\ %% $\phi_5^2$
           &        & $[1,1,1,1,-1,-1,-1,-1]$ \\
       8   &     4  & $[1,3,4,3,1]$ \\  %% $\phi_2^2 \phi_3$
           &        & $[1,0,-1,0,0,0,-1,0,1]$ \\
       9   &     7  & $[1,4,7,7,4,1]$ \\ %% $\phi_2^2 \phi_3$
           &        & $[1,1,-1,-1,0,0,-1,-1,1,1]$ \\
      10   &    10  & $[1,4,8,10,8,4,1]$ \\ %% $\phi_2^2 \phi_3^2$
           &        & $[1,1,0,-1,-1,0,-1,-1,0,1,1]$ \\
      11   &    13  & $[1,4,9,13,13,9,4,1]$ \\ %% $\phi_2 \phi_3^3$
           &        & $[1,1,1,-1,-1,-1,-1,-1,-1,1,1,1]$ \\
      12   &     9  & $[1,3,6,8,9,9,9,8,6,3,1]$ \\ %% $\phi_3^2 \phi_7$
           &        & $[1,1,1,-1,-1,-1,0,-1,-1,-1,1,1,1]$ \\
      13   &    22  & $[1,5,12,19,22,19,12,5,1]$ \\ %% $\phi_2^4 \phi_3 \phi_4$
           &        & $[1,1,-1,-1,-1,-1,0,0,1,1,1,1,-1,-1]$ \\
      14   &    17  & $[1,4,9,14,17,17,14,9,4,1]$ \\ %% $\phi_2 \phi_3^2 \phi_5$
           &        & $[1, 0, 0, -1, 0, -1, -1, 0, 1, 1, 0, 1, 0, 0, -1]$ \\
      15   &    30  & $[1,5,13,23,30,30,23,13,5,1]$ \\ %% $\phi_2^3 \phi_3^2 \phi_4$
           &        & $[1, 0, -1, -1, -1, 1, 0, 1, 1, 0, 1, -1, -1, -1, 0, 1]$ \\
      16   &    29  & $[1, 4, 9, 15, 21, 26, 29, 29, 26, 21, 15, 9, 4, 1]$ \\ %% $\phi_2 \phi_3 \phi_5 \phi_7$
           &        & $[1, 1, 0, -1, -1, -1, -1, -1, 0, 1, 1, 1, 1, 1, 0, -1, -1]$ \\
\hline
\end{tabular}
\end{center}
} %% end small

The $\pm$-palindromic symmetry present in these extreme examples is
evident, as is the fact that the factors have positive coefficients which
increase strictly towards the maximal central values.  There seems to
be no particular pattern to the cofactors.

\medskip
Beyond degree~$16$ it was impractical to conduct an exhaustive search.
Instead, given that every extreme example up to degree~$16$ found by
exhaustive search exhibited $\pm$-palindromic symmetry, we considered only
$\pm$-palindromic polynomials in higher degrees~---~this restriction greatly
reduced the number of cases to consider, and thus the computation time.  It
seems plausible to suppose that the examples found in this restricted
search are nonetheless extremal amongst all height~$1$ polynomials of that
same degree.  To keep the table compact we give only about half the
coefficients; the elided coefficients can easily be filled in because all
the polynomials are $\pm$-palindromic.  The entries in this table were obtained
using a slightly modified version of the program used for the table above.

{\small
\begin{center}
\label{tab:height1-reducible-part2}
\begin{tabular}{|r|r|l|}
\hline
     Deg   &  Height & Largest Factor \& Height 1 Polynomial\\
\hline
      17   &    42  & $[1, 5, 13, 24, 35, 42, 42, 35,\ldots]$ \\ %% $\phi_2^3 \phi_3 \phi_4 \phi_5$
           &        & $[1, 0, -1, 0, -1, -1, 0, 1, 1,\ldots, 1]$ \\
      18   &    42  & $[1, 5, 13, 24, 35, 42, 42, 35,\ldots]$ \\ %% $\phi_2^3 \phi_3 \phi_4 \phi_5$
           &        & $[1, 0, 0, 1, -1, -1, -1, -1, -1, 0,\ldots, -1]$ \\
      19   &    55  & $[1, 4, 10, 19, 30, 41, 50, 55, 55, 50,\ldots]$ \\ %% $\phi_2 \phi_3 \phi_4 \phi_5 \phi_7$
           &        & $[1, 0, 0, -1, -1, -1, 0, 0, 1, 1,\ldots, 1]$ \\
      20   &    65  & $[1, 6, 18, 36, 54, 65, 65, 54,\ldots]$ \\ %% $\phi2^3 \phi_3^2 \phi_5$
           &        & $[1, 1, 0, -1, -1, 0, -1, -1, 0, 1, 0,\ldots, -1]$ \\
      21   &   110  & $[1, 5, 14, 29, 49, 71, 91, 105, 110, 105,\ldots]$ \\ %% $\phi_2^3 \phi_3 \phi_4 \phi_5 \phi_7$
           &        & $[1, 0, -1, -1, -1, 0, 1, 1, 1, 1, 0,\ldots, -1]$ \\
      22   &   110  & $[1, 5, 14, 29, 49, 71, 91, 105, 110, 105,\ldots]$ \\  %% $\phi_2^3 \phi_3 \phi_4 \phi_5 \phi_7$
           &        & $[1, -1, -1, 0, 0, 1, 1, 0, 0, 0, -1, 0,\ldots, 1]$ \\
      23   &   161  & $[1, 7, 24, 55, 96, 136, 161, 161, 136,\ldots]$ \\ %% $\phi_2^5 \phi_3 \phi_4 \phi_5$
           &        & $[1, 1, -1, -1, -1, -1, 0, 0, 1, 1, 0, 0,\ldots, 1]$ \\
      24   &   173  & $[1, 6, 19, 42, 73, 107, 138, 161, 173, 173, 161,\ldots]$ \\  %% $\phi_2^3 \phi_3^2 \phi_4 \phi_5 \phi_8$
           &        & $[1, 0, -1, -1, 0, 0, -1, 1, 1, 1, 0, 1, 0,\ldots, -1]$ \\
      25   &   238  & $[1, 7, 25, 61, 114, 173, 220, 238, 220,\ldots]$ \\ %% $\phi_2^4 \phi_3^2 \phi_4 \phi_5$
           &        & $[1, 0, -1, -1, -1, 1, 0, 1, 1, 0, 0, -1, 0,\ldots, -1]$ \\
      26   &   233  & $[1, 6, 19, 43, 78, 120, 162, 197, 221, 233, 233, 221,\ldots]$ \\ %% $\phi_2^3 \phi_3^2 \phi_4 \phi_5 \phi_9$
           &        & $[1, 0, -1, 0, -1, -1, 0, 1, 1, 0, 1, 1, -1, 0, 1, \ldots,-1]$ \\
      27   &   356  & $[1, 7, 25, 62, 121, 197, 275, 334, 356, 334,\ldots]$ \\ %% $\phi_2^4 \phi_3 \phi_4 \phi_5^2$
           &        & $[1, 0, -1, 0, -1, -1, 0, 1, 1, 1, 0, 0, 0, -1, \ldots, -1]$ \\
      28   &   371  & $[1, 5, 15, 34, 64, 105, 155, 210, 265, 314, 351, 371, 371, 351,\ldots]$ \\ %% $\phi_2 \phi_3^2 \phi_4 \phi_5 \phi_7 \phi_9$
           &        & $[1, 0, 0, -1, -1, -1, 0, 0, 1, 0, 1, 1, 1, 1, 0,\ldots, -1]$ \\ %% $\phi_2^2 \phi_3 \phi_4 \phi_5 \phi_7 g_8$
      29   &   560  & $[1, 6, 19, 44, 84, 140, 210, 289, 370, 445, 506, 546, 560, 546,\ldots]$ \\
           &        & $[1, 1, -1, -1, -1, -1, -1, 0, 1, 1, 1, 1, 1, 1, 1,\ldots, -1]$ \\
\hline
\end{tabular}
\end{center}
} %% end small

In both of the tables above every one of the largest factors is a product
of powers of cyclotomic polynomials, with the exception of the height~$560$
polynomial which is a product of cyclotomic polynomials together with the
irreducible polynomial $g_8 = x^8+x^7+x^5+x^4+x^3+x+1$.

From the values in this second table we see that the height of the largest
factor increases roughly exponentially as $1.24^d$ where~$d$ is the
degree~---~though it is also clear that the heights do not form an
increasing sequence.  We note that the rate of growth is only slightly
greater than that for irreducible factors.

% HEIGHT 2: some results which beat height 1 (in terms of ratio):
% New max ht=9  for  2*x^4 -6*x^3 +9*x^2 -6*x +2  a factor of  2*x^7 -2*x^6 +x^5 +2*x^4 +2*x^3 +x^2 -2*x +2
% New max ht=10  for  2*x^5 -6*x^4 +10*x^3 -8*x^2 +4*x -1  a factor of  2*x^8 -2*x^7 +2*x^6 +2*x^5 +2*x^4 +x^3 -2*x^2 +2*x -1

%%%%%%%%%%%%%%%%%%%%%%%%%%%%%%%%%%%%%%%%%%%%%%%%%%%%%%%%%%%%%%%%%%%%%%%%%%%%%
\subsubsection{Height 1 multiples of $(x+1)^n$ or $(x-1)^n$}
\label{mignotte-ht1}

Here we shall see a simple way of constructing height~$1$ polynomials
having a large factor, namely $(x+1)^n$ or $(x-1)^n$.  We recall that
both $(x+1)^n$ and $(x-1)^n$ have height ${n \choose \lfloor n/2 \rfloor}
\approx 2^{n+1}/\sqrt{2\pi n}$.  For our purposes $(x+1)^n$ and $(x-1)^n$
are virtually interchangeable since if $(x+1)^n$ divides $f(x)$ then
$(x-1)^n$ divides $f(-x)$, and {\it vice versa.}

Now, for any~$n$, we construct explicitly a polynomial of height~$1$ having
$(x-1)^n$ as a factor.  Clearly $x-1$ divides $x^d-1$ for any positive
integer~$d$.  Consequently, $(x-1)^n$ divides the product $f =
\prod_{k=1}^{n} (x^{e_k}-1)$ for any choice of the exponents $e_k$.  In
general the product may have large height, but choosing the exponents
judiciously, for instance $e_k = 2^{k-1}$, gives us a polynomial of
height~$1$ (and degree $2^n-1$ in this instance).  We note also that the
cofactor $f/(x-1)^k$ is a product of polynomials of the form
$(x^{e_k}-1)/(x-1)$ all of whose coefficients are equal to~$1$.  We can
easily find a lower bound for the height of the cofactor: by considering
the degree of the cofactor, and the values at $x=1$ of each factor
$(x^{e_k}-1)/(x-1)$ we see that its height is at least $\prod e_j /\bigl(
1+\sum (e_j-1) \bigr)$.

A more general result about large factors of polynomials is Theorem~4
in~\cite{Mig88} which proves that we can find small height multiples of
polynomials in $\ZZ[x]$.  In particular, from it we can deduce that for
any~$n$ there exist height~$1$ multiples of $(x+1)^n$ of degree less than
$n^2 \log n$~---~a far lower degree than we get from the simple
construction above.  Alternatively, by writing $d=n^2 \log n$, we can
conclude that there are height~$1$ polynomials of degree at most~$d$ having
a factor of height at least $2^{\delta-1} / \sqrt{\delta}$ where $\delta
\approx \sqrt{2d/ \log d}$.

\medskip

In an earlier paper, Mignotte~\cite{Mig81} gave an (exponential) algorithm
for finding polynomials with coefficients in $\{-1, 0, 1\}$ which are
divisible by any specified power of $x+1$.  We used the algorithm to find
the lowest degree examples for each $n \le 8$: for the case $n=8$ we
found~\cite{Abb89} the lowest degree height~$1$ multiple to be $f_{41} =
(x+1)^8 \cdot g_{33}(-x)$ where
\begin{eqnarray*}
f_{41} &=& x^{41} -x^{40} - x^{39} + x^{36} +x^{35} -x^{33} + x^{32}-x^{30} -x^{27} + x^{23} + x^{22}\cr
      & & -x^{21} -x^{20} + x^{19} + x^{18} - x^{14} - x^{11} + x^9 - x^8 + x^6 + x^5 -x^2 -x +1
\end{eqnarray*}
and the cofactor is
\begin{eqnarray*}
g_{33} &=& x^{33} + 7x^{32} + 27x^{31} + 76x^{30} + 174x^{29} + 343x^{28} + 603x^{27} + 968x^{26}\cr
      & & + 1442x^{25} + 2016x^{24} + 2667x^{23} + 3359x^{22} + 4046x^{21} + 4677x^{20}\cr
      & & + 5202x^{19} + 5578x^{18} + 5774x^{17} + 5774x^{16} + 5578x^{15} + 5202x^{14}\cr
      & & + 4677x^{13} + 4046x^{12} + 3359x^{11} + 2667x^{10} + 2016x^9 + 1442x^8\cr
      & & + 968x^7 + 603x^6 + 343x^5 + 174x^4 + 76x^3 + 27x^2 + 7x + 1
\end{eqnarray*}

We observe that $f_{41}(-x)$ is indeed of the form
$\prod_{k=1}^{n}(x^{e_k}-1)$ with $n=8$ and exponents $e_k:
1,2,3,4,5,7,8,11$.  A remarkable example of this form is $f_{69}$ which has
exponents $e_k: 1,2,3,4,5,6,7,8,9,11,13$; it is a polynomial of degree~$69$
having a factor of height $2930202$.  These two polynomials along with many
other similar ones are mentioned in the article~\cite{BM00} where Borwein
and Mossinghof study specifically and in depth the question of the lowest
degree height~$1$ multiples of $(x+1)^n$.

\medskip
\noindent
{\small
{\bf Note}
We observe that certain height~$1$ polynomials of the form $\prod_{k=1}^{n}
(x^{e_k}-1)$ have the unusual property that some of the factors in the
square-free decomposition have large height; a concrete example is given by
$e_k: 3,4,5$.  We recall that one of the first steps in polynomial
factorization is to compute the square-free decomposition.
}

\section{Large Ratio Factorizations in $\ZZ[x]$}
\label{high-ratio}

In section~\ref{one-large-factor} we looked at factorizations having (at least) one
large factor, in this section we shall look at factorizations where all the
factors are large.  Our main interest is in ``short'' factorizations of the form $f =
g_1 g_2 \in \ZZ[x]$, \ie~with two (non-trivial) factors.  We concentrate our attention on
this type of factorization partly because that is conceptually the simplest
situation, and partly because that was the only case where it is feasible
to conduct thorough searches on the computer (beyond the lowest degrees).
Later on in section~\ref{many-large-irred-factors} we will see how to
construct ``longer'' examples with more than two irreducible factors.

In the Conclusion of the paper~\cite{BTW93} the authors wondered whether a
polynomial~$f \in \ZZ[x]$ must always have at least one small irreducible
factor, \ie~whose height is no greater than that of~$f$.  Collins gave a
negative answer in~\cite{Col04} where he published several quite simple
examples, including two factorizations having ratio slightly
greater than~$2$.  In this section we present many more counter-examples,
most with much larger ratio, even exceeding~$10$ in a few cases.  On the
basis of these examples it seems reasonable to conjecture that there exist
examples exhibiting arbitrarily great ratio.

The large ratio factorizations presented here also have a direct
implication for the single factor bounds of
section~\ref{single-factor-bounds} which bound the size of at least one
factor in any factorization.  Thus the example factorizations in this
section force the single factor bounds to be ``large''.

\goodbreak
%%%%%%%%%%%%%%%%%%%%%%%%%%%%%%%%%%%%%%%%%%%%%%%%%%%%%%%%%%%%%%%%%%%%%%%%%%%%%
\subsection{The  Search is Theoretically Finite}

Many of the examples we give here are probably extremal, \ie~there is no other
polynomial of the same degree having a factorization $f = g_1 g_2$ in
$\ZZ[x]$ exhibiting a greater ratio of $\min \{\height(g_1),\height(g_2)\}
/ \height(f)$.  Unfortunately, proving extremality seems to be rather difficult.
In~\cite{Rum06} Rump studied the behaviour of the two-norm under multiplication
in $\RR[x]$: in particular he looked at the minimal value $\mu = \min \{ |g_1 g_2|_2 :
|g_1|_2 = |g_2|_2 = 1 \}$ where the degrees of $g_1$ and $g_2$ are fixed.
The fact that $\RR[x]$ contains no zero-divisors together with a simple
closedness and continuity argument show that the minimum must be strictly
positive.  Rump reports that actually computing the minimum in general is
hard.  Nevertheless, knowing that $\mu > 0$ lets us prove the following lemma~---~its
practical usefulness is severely limited by the difficulty in obtaining
lower bounds for $\mu$ and by the computational cost of a truly exhaustive
search.  However, the lemma does imply the existence of attainable
rational upper bounds $\beta(d_1,d_2)$ for the ratio of short factorizations $f = g_1 g_2
\in \ZZ[x]$ with $\deg(g_1) = d_1$ and $\deg(g_2) = d_2$.

\begin{lemma}
  Let $d_1, d_2 \in \NN$ be positive.  There exists a constant $H$ depending on $d_1$
  and $d_2$ such that if $g_1, g_2 \in \ZZ[x]$ are of degree $d_1$ and
  $d_2$ respectively and $\height(g_1) > H$ or $\height(g_2) > H$ then
  $\height(g_1 g_2) > \min \{ \height(g_1), \height(g_2) \}$.  In other
  words: polynomials in $\ZZ[x]$ of fixed degree and having factorizations
  with ratio greater than~$1$ have bounded height.
\end{lemma}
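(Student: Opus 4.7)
The plan is to exploit Rump's positivity result $\mu > 0$ for the minimal $l_2$-norm product, combined with the standard comparison between the $l_2$-norm and the height. Throughout, write $n = d_1 + d_2$, and assume (without loss of generality) that $\height(g_1) \ge \height(g_2)$; the hypothesis ``$\height(g_1) > H$ or $\height(g_2) > H$'' then becomes simply $\height(g_1) > H$.

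First I would rescale. Set $\tilde g_i = g_i / |g_i|_2$, so $|\tilde g_i|_2 = 1$ and $\deg(\tilde g_i) = d_i$. By Rump's theorem, $|\tilde g_1 \tilde g_2|_2 \ge \mu$, where $\mu = \mu(d_1,d_2) > 0$ depends only on the two degrees. Multiplying through by $|g_1|_2 |g_2|_2$ gives the bilinear inequality
\[
|g_1 g_2|_2 \;\ge\; \mu \, |g_1|_2 \, |g_2|_2.
\]
Next I would convert back to heights using the two elementary comparisons $|h|_2 \ge \height(h)$ (trivial, as the $l_2$-norm dominates each individual coefficient) and $\height(h) \ge |h|_2 / \sqrt{\deg(h)+1}$ (Cauchy--Schwarz). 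Applying the first to $g_1$ and $g_2$ and the second to $g_1 g_2$ yields
\[
\height(g_1 g_2) \;\ge\; \frac{|g_1 g_2|_2}{\sqrt{n+1}} \;\ge\; \frac{\mu}{\sqrt{n+1}}\, \height(g_1)\, \height(g_2).
\]

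Finally I would choose $H = \sqrt{n+1}/\mu$, a constant depending only on $d_1$ and $d_2$. Under the assumption $\height(g_1) > H$ the displayed inequality gives
\[
\height(g_1 g_2) \;>\; \height(g_2) \;=\; \min\{\height(g_1),\height(g_2)\},
\]
which is exactly the conclusion of the lemma. The contrapositive is the second statement: polynomials of fixed degree $d_1 + d_2$ admitting a factorization $g_1 g_2$ with $\height(g_1 g_2) \le \min\{\height(g_1),\height(g_2)\}$ must have both factor heights bounded by $H$, and consequently their own heights are bounded.

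The only non-routine ingredient is the positivity $\mu > 0$, which is precisely the content of Rump's result cited in the paragraph preceding the lemma; the rest is bookkeeping between the $l_2$- and $l_\infty$-norms. The main obstacle is not in the proof itself but in the one the authors already flag: making $H$ explicit requires an effective lower bound for $\mu(d_1,d_2)$, which Rump notes is hard to obtain in general. Fortunately the lemma only asserts existence of $H$, so this practical difficulty does not impede the argument.
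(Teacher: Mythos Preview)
Your proof is correct and uses essentially the same approach as the paper: normalize each factor to unit $l_2$-norm, invoke Rump's positivity $\mu>0$, and compare $l_2$- and $l_\infty$-norms. The paper wraps this in a proof by contradiction (assuming an infinite sequence of high-ratio pairs and driving $|\bar g_1\bar g_2|_2$ below $\mu$), whereas you argue directly and extract the explicit constant $H=\sqrt{n+1}/\mu$; your version is cleaner and slightly more informative, but the underlying mechanism is identical.
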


\begin{proof}
  Suppose the contrary, \ie~there exists an infinite sequence of distinct pairs
  $(g_1^{(1)},g_2^{(1)}), (g_1^{(2)}, g_2^{(2)}), \ldots$ of unlimited
  height exhibiting ratio greater than~$1$, with every $\deg(g_1^{(k)})=d_1$
  and $\deg(g_2^{(k)}) = d_2$.  First note that the heights of both
  $g_1^{(k)}$ and $g_2^{(k)}$ are unlimited since otherwise there would be
  a finite limit on the height of $g_1^{(k)} g_2^{(k)}$.  But there are
  only finitely many polynomials of degree $d_1+d_2$ and bounded height,
  and unique factorization in $\ZZ[x]$ would imply only finitely many
  possible pairs.

  Thus for any $B>0$ there exists a pair $(g_1,g_2)$ with $\height(g_1) >
  B$ and $\height(g_2) > B$ and ratio $R < 1$.  Let $l_1 = |g_1|_2$ and
  $l_2 = |g_2|_2$, then we clearly also have $l_1 \ge \height(g_1) > B$ and
  $l_2 \ge \height(g_2) > B$.  Put $\bar g_1 = g_1/l_1$ and $\bar g_2 = g_2/l_2$, so
  $|\bar g_1|_2 = |\bar g_2|_2 = 1$.  Considering their product, we see that
\begin{eqnarray*}
|\bar g_1 \bar g_2|_2 &\le& \sqrt{d_1+d_2} \,\height(\bar g_1 \bar g_2) \cr
            & = & \sqrt{d_1+d_2} \,\height(g_1 g_2)/l_1l_2 \cr
            & < & \sqrt{d_1+d_2} \, R \min \{ \height(g_1), \height(g_2)\}/l_1l_2 \cr
            & < & \sqrt{d_1+d_2} \, R/l_s \cr
            & < & \sqrt{d_1+d_2} / B \cr
\end{eqnarray*}
where $s$ is the index of whichever of $g_1$ and $g_2$ has greater height.
This contradicts the strict positivity of $\mu$.
\end{proof}

%%%%%%%%%%%%%%%%%%%%%%%%%%%%%%%%%%%%%%%%%%%%%%%%%%%%%%%%%%%%%%%%%%%%%%%%%%%%%
\subsection{A Family of Irreducible Factorizations of Ratio $> 1$}

Here is a simple result showing that there are irreducible factorizations
with ratio $> 1$ in every even degree (from~$10$ upwards)~---~the result
depends on a conjecture (see below).

Let $n>1$ be an integer, and let $g_1 = nx^2-(2n-1)x+n \in \ZZ[x]$ and $g_2
= (x^{n+1}+x^n+x^{n-1}+\cdots+1)(x^{n+2}+x^{n+1}+x^n+\cdots+1) \in \ZZ[x]$.
Note that $\height(g_2) = n+2$.  It is not hard to show that the product
$g_1 g_2$ has height $n+1$; similarly, it can be shown that both $g_1 (g_2 x+1)$
and $g_1 (g_2 x^3+x^2+x+1)$ also have height $n+1$.  Now $g_2$ is obviously
reducible, but we make the following conjecture~---~verified for all $n < 1000$.

\subsubsection*{Conjecture}
(with the notation of the preceding paragraph)
\begin{itemize}
\item $g_2x+1$ is irreducible whenever $n \not\equiv 2 \bmod 3$,
\item $g_2x^3+x^2+x+1$ is irreducible whenever $n \equiv 2 \bmod 3$.
\end{itemize}

\bigskip
\noindent
{\small
{\bf Note} In practice, for small values of~$n$ it seems to be easy to find irreducible
polynomials $\tilde g_2$ of height $2n-1$ for which the product $g_1 \tilde g_2$ has
height~$n$, though we have not managed to discern any obvious pattern in such
polynomials.  We do note that the minimal degree of suitable $\tilde g_2$ does
rise quickly as~$n$ increases: starting from $n=2$ the first few minimal
degrees are $7,11,14,20,22,26$.
}

%%%%%%%%%%%%%%%%%%%%%%%%%%%%%%%%%%%%%%%%%%%%%%%%%%%%%%%%%%%%%%%%%%%%%%%%%%%%%
\subsection{A Family of Factorizations with Unlimited Ratio}
\label{symmetric-reducible-family}

Here we present an explicit family where the ratio increases exponentially
with degree.  Though we are primarily interested in irreducible
factorizations, the bounds in section~\ref{single-factor-bounds} apply to
any factorization; so this family constrains those bounds to grow rapidly.

Let $f(x) = (x+1)(x^2+x+1) = x^3+2x^2+2x+1$.  Then $1-x^6 = f(x) \cdot
f(-x)$ is a $*$-symmetric factorization.  We obtain our family simply by
taking powers: namely, $(1-x^6)^k = f^k(x) \cdot f^k(-x)$.  We estimate the
height of $(1-x^6)^k$ using the binomial theorem and Stirling's
approximation: $\height \bigl( (1-x^6)^k\bigr) \approx 2^{k+1}/\sqrt{2 \pi
  k}$.  We do not have an explicit formula for the asymptotic height of $f^k$~---~but see
Conjecture~$1$ in the Conclusion.  Nevertheless,
by considering the value of $f(x)$ at $x=1$, we can
deduce that $\height(f^k) \ge 6^k/(3k+1)$.  Hence the ratio of the
factorization is greater than $3^k/(3k+1)$.  From this we obtain $1.2^d$ as
an asymptotic lower bound for the greatest ratio among (reducible)
factorizations of polynomials of degree~$d$.

\medskip
\noindent
{\small
{\bf Note}  An exhaustive search in low degrees suggests that this family of
  factorizations is close to extremal.
}

\subsection{Extremal Short Factorizations}
\label{low-deg-2-large-factors}

In this section we present the results of some extensive computer searches
for large ratio short factorizations in $\ZZ[x]$.  The searches were conducted
degree by degree, and in each degree we tried all possibilities for the
degrees of the two factors.  As we commented earlier it is hard to be
absolutely certain that these examples are extremal; however, if there are
other factorizations exhibiting a greater ratio then the factors must have
considerably larger height than the example given in the table.

The examples in the table are not generally unique (\ie~in most cases there
are other polynomials of the same degree whose factorizations exhibit the
same ratio).  Up to and including degree~$14$ the examples are almost
certainly extremal; we think the examples in degrees~$15$, $16$ and $17$
are probably extremal, but doubt that the example in degree~$18$ is
extremal.  The computations were done using an {\it ad hoc\/} program in C++.

{\small
\begin{center}
\label{tab:pairs-reducible}
\begin{tabular}{|r|r|l|}
\hline
     Deg   &  Ratio & Example factorization\\
\hline
       5   &   2    & $[1, 2, 1]\times[1, -2, 2, -1]$ \\
       6   &   2    & $[1, 2, 2, 1]\times[1, -2, 2, -1]$ \\
       8   &   4    & $[1, 3, 4, 3, 1]\times[1, -3, 4, -3, 1]$ \\
       9   &   4    & $[1, 3, 4, 3, 1]\times[1, -3, 4, -4, 3, -1]$ \\
      10   &   5    & $[1, 3, 5, 5, 3, 1]\times[1, -3, 5, -5, 3, -1]$ \\
      11   &   7    & $[1, 4, 7, 7, 4, 1]\times[1, -4, 8, -10, 8, -4, 1]$ \\
      12   &   7    & $[1, 4, 7, 7, 4, 1]\times[1, -4, 8, -11, 11, -8, 4, -1]$ \\
      13   &  11    & $[1, 5, 11, 14, 11, 5, 1]\times[1, -4, 8, -11, 11, -8, 4, -1]$ \\
      14   &  12    & $[2, 9, 19, 24, 19, 9, 2]\times[1, -5, 13, -22, 26, -22, 13, -5, 1]$ \\  %% NOT 7+7!!!
\hline
      15   &  18    & $[1, 5, 12, 18, 18, 12, 5, 1]\times[1, -5, 12, -19, 22, -19, 12, -5, 1]$ \\
      16   &  18    & $[1, 6, 17, 30, 36, 30, 17, 6, 1]\times[2, -10, 25, -41, 48, -41, 25, -10, 2]$ \\  %% NOT MONIC!!!
      17   &  24    & $[1, 6, 18, 35, 48, 48, 35, 18, 6, 1]\times[2, -10, 25, -41, 48, -41, 25, -10, 2]$ \\  %% INCOMPLETE SEARCH!!!
      18   &  23    & $[1, 5, 12, 19, 23, 23, 19, 12, 5, 1]\times[1, -5, 12, -19, 23, -23, 19, -12, 5, -1]$ \\ %% less than 24!!!
\hline
\end{tabular}
\end{center}
} %% end small

At the higher degrees we limited the searches heuristically.  Beyond
degree~$13$ we considered only pairs whose degrees differ by at most~$2$,
because all the best examples found in lower degrees had this property.  Up
to degree~$14$ it is very evident that there is always a best example whose
factors are palindromic, thus beyond degree~$14$ we searched only for pairs
of palindromic polynomials.

It is also clear that the factorizations are of the form $g_1(x) \cdot
g_2(-x)$ where $g_1$ and $g_2$ have positive coefficients which increase
strictly towards the central terms~---~the same phenomenon we observed in
section~\ref{ht1-reducible}.  We note the curious fact that the examples in
degrees~$16$ and~$17$ contain the same non-monic factor.

%%%%%%%%%%%%%%%%%%%%%%%%%%%%%%%%%%%%%%%%%%%%%%%%%%%%%%%%%%%%%%%%%%%%%%%%%%%%%
\subsection{Extremal Irreducible Short Factorizations}
\label{irred-large-ratio}

Here we look at factorizations $f = g_1 g_2 \in \ZZ[x]$ with the additional
requirement that the two factors be irreducible.  We will see that this
extra requirement limits severely the growth of the ratio compared to what
we observed in the previous subsection.  Ideally, for the purposes of
polynomial factorization, we would like to have a single factor bound which
is valid only for irreducible factors in $\ZZ[x]$  The examples here show
that a single factor bound specific to irreducible factors could be far
smaller than the ones we currently know: from this table we see that the
log of the ratio grows roughly as $0.07\, d - 0.3$ where $d$ is the degree.

The computations were done using an {\it ad hoc\/} program in C++ which
conducted a ``weak irreducibility'' test (\ie~which simply tested for
divisibility by a few commonly occurring polynomials, primarily low index
cyclotomics).  The final verification of irreducibility was effected using
CoCoA~\cite{CoCoA}.

\begin{center}
\label{tab:pairs-irred}
\begin{tabular}{|r|r|l|}
\hline
      Deg  &  Ratio & Example factorization\\
\hline
       7   &   1.25 & $[3, 5, 3]\times[-1, 3, -5, 4, -3, 1]$ \\
       8   &   1.50 & $[2, 4, 6, 5, 2]\times[2, -5, 6, -4, 2]$ \\
       9   &   1.50 & $[2, 3, 2]\times[1, -1, 1, 0, -2, 3, -2, 1]$ \\
           &        & $[1, 2, 3, 3, 2]\times[1, -2, 3, -1, -1, 1]$ \\
      10   &   1.50 & $[1, 1, 0, -2, -3, -2]\times[2, -3, 2, 0, -1, 1]$ \\
      11   &   1.67 & $[2, 4, 5, 4, 2]\times[1, -1, 0, 2, -4, 5, -3, 1]$ \\
      12   &   2.00 & $[1, 1, 1, 0, -1, -2, -1]\times[-1, 2, -1, 0, 1, -1, 1]$ \\
\hline
      14   &   2.00 & $[1, 1, 0, -1, 0, 1, 2, 1]$ \\ %%nothing better with height <= 20
      16   &   2.16 & $[2, 6, 8, 3, -6, -13, -12, -6, -1]$ \\
      18   &   2.71 & $[4, 12, 19, 18, 9, -2, -7, -6, -3, -1]$ \\
      20   &   3.00 & $[1, 1, 0, -1, -1, -1, 0, 2, 3, 2, 1]$ \\
\hline
      22   &   3.50 & $[2, 6, 11, 14, 13, 7, 0, -5, -8, -9, -6, -2]$ \\ %%(unique, ~17hrs on pitagora) ht <= 17, ratio >3.01
      24   &   4.25 & $[2, 6, 10, 11, 8, 0, -10, -17, -16, -8, 1, 4, 2]$ \\ %%(unique, about 36hrs on abbottpb) ht<=19, ratio >=4
%      26   &   3.66 & $[1, 4, 7, 7, 3, -3, -9, -11, -8, -1, 5, 7, 4, 1]$ \\ %%(unique soln, ~19hrs on pitagora) ht<=11, ratio >3.6
%      28   &   4.00 & $[1, 2, 3, 4, 5, 4, 1, -4, -8, -8, -4, 0, 2, 2, 1]$ \\ %%(3 solns, ~17hrs in pitagora)
%      32   &   4.00 & $[1, 3, 4, 3, 1, -1, -3, -4, -4, -2, 1, 3, 2, 0, -2, -2, -1]$ \\ Lowest deg with ratio=4 & ht=4
%      32   &   4.00 & $[1, 1, 1, 1, 2, 2, 1, -2, -4, -4, -2, -1, 0, 1, 2, 2, 1]$ \\
\hline
\end{tabular}
\end{center}

The examples given in the table are not generally unique (\ie~in most cases
there are other pairs of polynomials exhibiting the same ratio).  Up to and
including degree~$12$ the examples are very probably extremal, \ie~any
example exhibiting a greater ratio must have factors of considerably larger
height.  It is quite possible that in degrees~$22$ and higher there are
examples of slightly greater height exhibiting greater ratios; the
necessary computations were not attempted because they would take
unreasonably long.

It is very evident that in the even degrees (up to and including degree~$12$)
there is always a $*$-symmetric best example (\ie~where $g_1(x) = \bar g_2(-x)$).
To save time in the searches beyond degree~$12$, we restricted ourselves to
even degrees and to $*$-symmetric factorizations.

\medskip

Note the example in degree~$16$ exhibits a greater ratio than Collins's
rather larger ``winning'' polynomial (on page~$1518$ in~\cite{Col04}); on
the following page he claims an example of the same degree with ratio about~$2.20$, but we
suspect that there is a misprint, and that that polynomial is the same as
our example in degree~$20$ given in the next subsection.  Our example in
degree~$20$ is much smaller than Collins's examples and exhibits a
significantly larger ratio, namely~$3.00$.

%%%%%%%%%%%%%%%%%%%%%%%%%%%%%%%%%%%%%%%%%%%%%%%%%%%%%%%%%%%%%%%%%%%%%%%%%%%%%
\subsection{Extremal Palindromic $*$-Symmetric Factorizations}
\label{palindromic-irred}

In the previous subsection the exponential increase in candidate
factorizations made it impractical to conduct full searches at higher
degrees.  Inspired by the appearance of palindromicity and $*$-symmetry in
earlier examples, and realising that these symmetries greatly reduce the
number of candidates to consider, we decided to restrict attention to
palindromic $*$-symmetric factorizations.  While this will not let us find
the probable extreme ratio in each degree it does at least establish a
lower bound for that value.

Since any odd degree palindromic polynomial is always divisible by $x+1$,
and since we are seeking palindromic $*$-symmetric irreducible short
factorizations, we consider only degrees which are multiples of~$4$.  The
search was effected using an {\it ad hoc\/} C++ program which conducted a
brute force search with some simple pruning criteria; irreducibility of the
factors was subsequently verified using CoCoA~\cite{CoCoA}.  Here is the
resulting table:

{\small
\begin{center}
\label{tab:palindromic-irred}
\begin{tabular}{|r|r|l|}
\hline
     Deg   &  Ratio & Corresponding $g_1$\\
\hline
       8   &   1.25 & $[2,4,5,4,2]$ \\
      12   &   1.48 & $[5,17,31,37,31,17,5]$ \\
      16   &   1.69 & $[13,63,157,256,299,256,157,63,13]$ \\
      20   &   2.20 & $[29,175,543,1119,1683,1921,1683,1119,543,175,29]$ \\
      24   &   3.28 & $[4,14,22,13,-17,-53,-69,-53,-17,13,22,14,4]$ \\
      28   &   4.25 & $[2,6,10,10,4,-6,-14,-17,-14,-6,4,10,10,6,2]$ \\
      32   &   7.22 & $[3,13,29,41,37,12,-24,-54,-65,\ldots,3]$ \\
      36   &  11.37 & $[6,34,98,182,234,194,41,-181,-377,-455,\ldots,6]$ \\
      40   &  13.75 & $[6,33,93,175,243,249,158,-26,-248,-427,-495,\ldots,6]$ \\
\hline
\end{tabular}
\end{center}
} %% end small

As in the other subsections here, it is difficult to be completely certain of extremality;
nevertheless we believe the examples up to and including degree~$32$ to be extremal among
palindromic $*$-symmetric factorizations since any example exhibiting greater ratio must
have factors of considerably larger height.  The example in degree~$36$ is
probably extremal too, but the search was deliberately limited to where we
expected to find a good example.  The example in degree~$40$ is unlikely to
be extremal: several {\it ad hoc\/} tricks were used to make the search
faster (\ie~less slow).

We note that from degree~$24$ the best example in each degree contains
negative coefficients, and has much smaller height compared to the best
example in degree~$20$.  Collins in~\cite{Col04} chose to consider only
positive coefficients and reported finding no example with ratio greater
than $2.20$.  We suspect that the factorization in degree~$20$ is the
example hinted at on page~$1519$ in~\cite{Col04}, though he reported
(apparently mistakenly) that it was in degree~$16$.  We point out that our
degree~$20$ example in the previous subsection has a larger ratio.

%%%%%%%%%%%%%%%%%%%%%%%%%%%%%%%%%%%%%%%%%%%%%%%%%%%%%%%%%%%%%%%%%%%%%%%%%%%%%
\subsection{Height 1 Polynomials with 2 Large Irreducible Factors}
\label{ht1-large-ratio}

After looking at the examples in the preceding subsection one might think
it is necessary to consider polynomials with ever larger coefficients in
order to find large ratio (short) factorizations.  Here we
see that apparently it is enough to consider polynomials of height~$1$.
While the few examples here and in the next subsection certainly do not
prove that we can go on indefinitely, they seem to be enough to let us
conjecture there are polynomials of height~$1$ whose irreducible factorizations
exhibit arbitrarily great ratios.

These examples (and those in the following subsection) are particularly
interesting because, together with the result in
section~\ref{many-large-irred-factors}, they show that there are
polynomials of height~$1$ with arbitrarily many irreducible factors of
large height (at least up to~$11$).

%% See Pairs-ht1
Here is a small table of factorizations of polynomials of height~$1$ which
exhibit increasing ratios.  The first three examples are surely of minimal
degree; we are not sure whether the others are of lowest possible degree
exhibiting that ratio.  The $*$-symmetric examples with ratios~$4$ and~$5$
are of minimal degree amongst $*$-symmetric factorizations with those
ratios.  The search was effected using an {\it ad hoc\/} C++ program which
conducted a brute force search with some simple pruning criteria;
irreducibility of the factors was subsequently verified using
CoCoA~\cite{CoCoA}.

{\small
\begin{center}
\label{tab:palindromic-irred-ht1}
\begin{tabular}{|r|r|l|}
\hline
   Ratio  &    Deg  & Factorization\\
\hline
       2  &     12  & $[1, 2, 1, 0, -1, -1, -1] *\hbox{-symmetric}$ \\
       3  &     20  & $[1, 2, 3, 2, 0, -1, -1, -1, 0, 1, 1] *\hbox{-symmetric}$ \\
       4  &     27  & $[1, 1, 0, -2, -4, -4, -2, 0, 2, 3, 3, 2, 1] \times$ \\ % just two solns
          &         & $[1, -2, 2, 0, -1, 0, 1, 0, -1, 1, 0, 0, -2, 4, -3, 1]$ \\
          &     32  & $[1, 1, 1, 1, 2, 2, 1, -2, -4, -4, -2, -1, 0, 1, 2, 2, 1] *\hbox{-symmetric}$ \\
       5  &     30  & $[1, 1, -1, -4, -5, -4, -1, 1, 1]\times$ \\
          &         & $[1, -2, 2, -1, -1, 3, -4, 4, -2, -1, 4, -5, 4, -1, -2, 4, -4, 3, -1, -1, 2, -2, 1]$ \\
       5  &     32  & $[1, 2, 2, 2, 2, 0, -3, -5, -5, -4, -2, 0, 2, 3, 3, 2, 1] *\hbox{-symmetric}$ \\ %% LOWEST DEG of form f(x)*f(-1/x)
       6  &     40  & $[1, 2, 2, 0, -2, -3, -1, 2, 4, 4, 3, 1, -2, -5, -6, -4, -1, 1, 2, 2, 1] *\hbox{-symmetric}$ \\
\hline
\end{tabular}
\end{center}
} %% end small

%%%%%%%%%%%%%%%%%%%%%%%%%%%%%%%%%%%%%%%%%%%%%%%%%%%%%%%%%%%%%%%%%%%%%%%%%%%%%
\subsection{Height 1 Polynomials with Large Ratio Factorizations:\\ palindromic $*$-symmetric factors}
\label{ht1-large-ratio-palin}
%% See PseudoSquares-ht1

Extending the searches of the previous subsection to ratios greater than~$6$
would have been prohibitively expensive.  Somewhat arbitrarily we decided
to restrict consideration to palindromic $*$-symmetric factorizations in an
attempt to find examples with greater ratio.

The examples here suggest that for any height~$H$ there exist polynomials
of height~$1$ having irreducible $*$-symmetric palindromic factorizations
of ratio least~$H$.  The table below exhibits for each height one of the
factors; for compactness we have used ellipsis for the higher degree
factors.  Up to and including height~$6$ the polynomials are surely of
minimal degree; for heights~$7$ and greater we restricted the search space,
and so could conceivably have missed some lower degree example.  We note
that the degrees are not always increasing.  There appears to be no obvious
pattern.

{\small
\begin{center}
\label{tab:palindromic-ht1-irred}
\begin{tabular}{|r|r|l|}
\hline
   Ht  &    Deg  & Palindromic factor\\
\hline
    2   &     32  & $[1,1,1,1,1,0,-1,-2,-2,-2,-1,0,1,1,1,1,1]$ \\
    3   &     36  & $[1,1,1,1,1,0,0,-1,-2,-3,-2,-1,0,0,1,1,1,1,1]$ \\
    4   &     68  & $[1,3,4,3,1,-1,-3,-4,-3,-1,1,2,2,1,0,-1,-1,-1,\ldots]$ \\
    5   &     76  & $[1,2,2,1,0,0,1,2,2,1,0,-1,-1,0,1,1,0,-2,-4,-5,\ldots]$ \\
    6   &     72  & $[1,2,2,1,0,-1,-2,-3,-3,-3,-3,-3,-2,-1,0,2,5,6,6,\ldots]$ \\
\hline
    7   &    100  & $[1,3,5,6,5,2,-1,-2,-1,1,3,3,1,-2,-5,-7,-7,-5,-2,$ \\
        &         & $ \quad 0,0,-1,-1,0,2,4,5,5,\ldots]$ \\
    8   &     84  & $[1,3,5,5,2,-3,-7,-8,-6,-2,2,3,1,-2,-4,-4,-1,3,5,4,2,1,\ldots]$ \\
    9   &     88  & $[1,3,5,6,6,5,3,0,-3,-5,-6,-7,-8,-9,-9,-7,-3,1,4,6,7,7,7,\ldots]$ \\
   10   &    120  & $[1,3,4,3,1,-1,-3,-5,-7,-8,-7,-5,-3,-1,1,3,5,7,9,10,9,6,3,$ \\
        &         & $ \quad 0,-3,-5,-5,-4,-3,-3,-3,\ldots]$ \\
   11   &    100  & $[1, 3, 5, 5, 2, -3, -7, -8, -6, -2, 3, 7, 8, 5, 0, -4, -5, -3, 1,$ \\
        &         & $ \quad 5, 7, 6, 2, -4, -9, -11,\ldots]$ \\
\hline
\end{tabular}
\end{center}
} %% end small

%%%%%%%%%%%%%%%%%%%%%%%%%%%%%%%%%%%%%%%%%%%%%%%%%%%%%%%%%%%%%%%%%%%%%%%%%%%%%
\subsection{Large Ratio Factorizations of $x^d-1$ in $\RR[x]$ and $\CC[x]$}
\label{xd-1-large-ratio}

In this subsection we extend briefly our horizon to encompass
factorizations in $\RR[x]$ and $\CC[x]$.  The main reason for doing so is
that the single factor bounds are valid for any factorization in $\CC[x]$
satisfying the scaling hypothesis.  We shall see that this wide applicability of
the bounds forces them to much larger than necessary for bounding factors
in $\ZZ[x]$.

In section~\ref{large-factors-xd-1} we saw that the apparently innocuous
polynomial $x^d-1$ can have quite large factors in $\RR[x]$ or $\CC[x]$.
Here we shall see that it also admits large ratio factorizations, at least
when $d$ is even; as a consequence the single factor bounds must grow
exponentially with degree.  Below we shall use the specific primitive
$d$-th root of unity $\zeta=\exp(i \theta_d)$ where $\theta_d = 2 \pi /d$.

First we consider the case $d=4t+2$.  Here we find that a greatest
height factor is $u_d = \prod_{k=-t}^{t} (x-\zeta^k)$, \ie~the polynomial whose roots are all $d$-th
roots of unity with strictly positive real part.  Moreover $u_d \in
\RR[x]$ is palindromic and $x^d-1 = u_d(x) \cdot u_d(-x) = u_d(x)
\cdot u_d^*(x)$, that is we have a $*$-symmetric factorization in
$\RR[x]$.  Since the factorization is $*$-symmetric, its ratio is
equal to the height of $u_d$.

Empirically, based on computed values up to $d=100$, we observe that
$\height(u_d) \approx 0.097 \times 1.34^n$, or equivalently $\log
\height(u_d) \approx 0.29 d - 2.33$.  For a degree $\lfloor d/2 \rfloor$
factor the best bounds are the binomial bound (with $\rho=1$) or Mignotte's
bound (with $M(f)=1$): they allow height up to ${\lfloor d/2 \rfloor
  \choose \lfloor d/4 \rfloor}$ which we can approximate using Stirling's
formula to obtain $2^{1+d/2}/\sqrt{\pi d} \sim 1.41^d/\sqrt{d}$.  Thus
we see that the bounds are too large by only about 20\% in logarithmic terms.
%% a bit tighter than we found in section~\ref{xd-1-large-RR-factor}.

We do not have a proof of correctness of the empirical formula above, but
we can prove a lower bound on the height of $u_d$.  We use the
factorization $u_d(x) = (x-1) \prod_{k=1}^t (x^2 -2 \cos (k \theta_d) x
+1)$.  Since $u_d$ has degree $2d+1$ it must have height at least $|
u_d(-1)| /(2d+2)$.  We estimate $|u_d(-1)|$ using the factorization
$u_d(-1) = -2 \prod_{k=1}^t (2+2 \cos (k \theta_d))$.  Observing that for
each index $k$ we have $\cos (k \theta_d) + \cos ((t+1-k) \theta_d) > 1$,
it is easy to show that $\prod_{k=1}^t (1+\cos (k \theta_d)) > 2^{t/2}$.
Hence $|u_d(-1)| \ge 2^{3t/2+1} = 2^{(3d+2)/8}$.  And we can thus conclude
that asymptotically $\log \height(u_d) > 0.2599 d$.

\medskip

For the case $d=4t+4$, we obtain a largest ratio $*$-symmetric
factorization $x^d-1 = v_d(x) \cdot v_d(-x) \in \CC[x]$ where $v_d(x) =
u_d(x) \cdot (x+i)$.  Empirically, we observe that $\log \height(v_d)
\approx 0.29 d - 2.33$ which is the same growth rate seen above for the
factor $u_d$.  The asymptotic lower bound continues to be valid in this
case too.

%%%%%%%%%%%%%%%%%%%%%%%%%%%%%%%%%%%%%%%%%%%%%%%%%%%%%%%%%%%%%%%%%%%%%%%%%%%%%
%%%%%%%%%%%%%%%%%%%%%%%%%%%%%%%%%%%%%%%%%%%%%%%%%%%%%%%%%%%%%%%%%%%%%%%%%%%%%
\section{Longer Large Ratio Irreducible Factorizations}
\label{many-large-irred-factors}

In section~\ref{high-ratio} we concentrated on short factorizations
(\ie~with just two factors), and it is natural to ask whether there are
polynomials having more than two irreducible factors each of which has
greater height than the original polynomial.

Direct search is limited by its exponential nature.  We did not succeed in
finding in a reasonable time any small examples (\ie~with three irreducible
factors each of low degree and low height).  Extending the search looks to
be hopeless.  Nevertheless we believe that fairly small examples exist, but
we just do not know a good way to find them.  Below we present a
construction which allows us to create high degree examples with an even
number of irreducible factors; currently we know of no similar method for
constructing examples with an odd number of irreducible factors.

\subsection{A Handy Lemma}

\begin{lemma}\label{handy-lemma}
Let $f \in \ZZ[x]$ be irreducible and not cyclotomic.  Then we have the following:
\begin{enumerate}
\item $f(x^p)$ is reducible in $\ZZ[x]$ for only finitely many primes $p$

\item if $p$ is an odd prime, and both $f(x)$ and $f(x^p)$ are
irreducible then $f(x^{p^k})$ is irreducible for all $k \in\NN$.

\item if $p$ is an odd prime then $f(x^p)$ is irreducible if $f(0)/\lc(f)$ is not a $p$-th power in $\QQ$.
\end{enumerate}
\end{lemma}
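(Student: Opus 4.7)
The plan is to reduce all three parts to a classical theorem of Capelli: for an odd prime $p$ and a nonzero element $a$ of a field $K$ of characteristic zero, the polynomial $x^{p^k} - a$ is irreducible over $K$ if and only if $a \notin K^p$. Let $\alpha$ be a root of $f$ (so $\alpha \ne 0$, since $f \ne \pm x$) and set $K = \QQ(\alpha)$, so $[K:\QQ] = \deg(f)$. Any root $\beta$ of $f(x^{p^k})$ satisfies $\beta^{p^k} = \alpha'$ for some conjugate $\alpha'$ of $\alpha$, and a degree count shows that $f(x^{p^k})$ is irreducible over $\QQ$ if and only if $x^{p^k} - \alpha$ is irreducible over $K$; by Capelli this is equivalent to $\alpha \notin K^p$, a condition that does not involve $k$.

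Part~(2) then falls out immediately: the irreducibility of $f(x^p)$ and of $f(x^{p^k})$ are each equivalent to $\alpha \notin K^p$, so the former implies the latter for every $k \ge 1$.

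For part~(3) I would argue contrapositively. If $\alpha = \gamma^p$ with $\gamma \in K$, then applying the norm $N = N_{K/\QQ}$ yields $N(\alpha) = N(\gamma)^p$. Expanding $f(x) = \lc(f)\prod_i(x - \alpha_i)$ in $\CC[x]$ gives $f(0)/\lc(f) = (-1)^{\deg(f)} N(\alpha)$, and since $p$ is odd the sign $(-1)^{\deg(f)}$ is itself a $p$-th power in $\QQ$. Hence $f(0)/\lc(f)$ is a $p$-th power in $\QQ$, and contrapositively the hypothesis of part~(3) forces $\alpha \notin K^p$, giving the irreducibility of $f(x^p)$.

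Part~(1) is the real work. I would split on whether $\alpha$ is a unit in $\mathcal{O}_K$. If some finite prime $\mathfrak{p}$ of $\mathcal{O}_K$ has $v_\mathfrak{p}(\alpha) \ne 0$, then $\alpha = \gamma^p$ forces $p \mid v_\mathfrak{p}(\alpha)$, so $p$ lies among the finitely many prime divisors of that fixed nonzero integer. Otherwise $\alpha \in \mathcal{O}_K^*$; because $f$ is irreducible in $\ZZ[x]$ and not cyclotomic, $\alpha$ is not a root of unity, so by Dirichlet's unit theorem we may write $\alpha = \zeta\,\epsilon_1^{a_1}\cdots\epsilon_r^{a_r}$ with $\zeta$ a root of unity and at least one exponent $a_i \ne 0$. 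If $\alpha = \gamma^p$ then $\gamma$ is also a unit (every $v_\mathfrak{p}(\gamma) = 0$), and matching exponents against the fundamental units shows $p \mid a_i$ for each $i$, again bounding $p$ by the finitely many prime divisors of $\gcd(a_1,\ldots,a_r)$. The main obstacle is this unit case: it requires both Dirichlet's unit theorem and the ``not cyclotomic'' hypothesis, whereas the rest of the lemma reduces mechanically to Capelli.
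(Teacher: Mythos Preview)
Your proof is correct and follows the same outline as the paper's: both reduce everything to Capelli's criterion (the paper works over the splitting field rather than $\QQ(\alpha)$, a cosmetic difference), handle part~(2) via the $k$-independence of the condition $\alpha\notin K^p$, and argue part~(3) by taking norms. For part~(1) the paper simply asserts that a non-root-of-unity $\alpha$ is a $p$-th power for only finitely many primes~$p$ without further justification; your valuation/Dirichlet-unit case split is precisely the standard way to substantiate that claim, so you have supplied a detail the paper leaves implicit rather than taken a different route.
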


\begin{proof}
  Let $K:\QQ$ be a splitting field extension for $f$.  So we have a
  factorization $f(x) = \prod (x-\alpha_i) \in K$ where the $\alpha_i$ are
  the roots of $f$ in $K$.  Let $p$ be a prime; then obviously we have a
  factorization $f(x^p) = \prod (x^p-\alpha_i)$ in $K[x]$.  Now suppose
  that $f(x^p)$ is reducible in $\ZZ[x]$ and let $g \in \ZZ[x]$ be one of
  its irreducible factors.  Then each polynomial $x^p-\alpha_i$ has a
  non-trivial factor $\gcd(g(x), x^p-\alpha_i) \in K[x]$.  But for any odd
  prime $p$, by Capelli's theorem, $x^p-\alpha_i$ is reducible if and only
  if $\alpha_i$ is a $p$-th power (or of the form $-4\beta^4$ if $p=2$).
  Since $f$ is not cyclotomic, its roots are not roots of unity, and thus
  $f(x^p)$ is reducible only for finitely many primes $p$.

  For the second claim, when both $f(x)$ and $f(x^p)$ are irreducible we know
  from Capelli's Theorem that the $\alpha_i$ are not $p$-th powers.  The
  irreducibility of $x^{p^k}-\alpha_i$ immediately follows, and hence $f(x^{p^k})$
  is irreducible too.

  For the third claim, the norm of each root $\alpha_i$ is just $f(0)/\lc(f)$.
  If $\alpha_i$ is a $p$-th power then so must its norm be; conversely, if the
  norm is not a $p$-th power then $\alpha_i$ cannot be, and $f(x^p)$ is irreducible.
\smallskip
  I am indebted to Barry Trager for the outline of this proof.
\end{proof}

%% Conj: if f(x^m) & f(x^n) are irred then so is f(x^{mn}) provided m and n are of the
%% form 4^a*odd

\subsection{The Construction of Longer Factorizations}

Starting from some of the high ratio short factorizations in
section~\ref{high-ratio}, we now show how to construct polynomials having
at least~$4$ irreducible factors each of height greater than the original
polynomial.  We suppose we already have an irreducible factorization
$f=g_1g_2$ where $\height(f)^2 < \min(\height(g_1), \height(g_2))$~---~note
that the height of $f$ is squared.  Any example from
section~\ref{ht1-large-ratio} is a suitable candidate; for instance, the
first example gives us a height~$1$ polynomial with irreducible factors
$g_1 = x^6-x^5+x^4-x^2+2x-1$ and $g_2 = x^6 + 2x^5 + x^4 - x^2 - x - 1$.

Now we look for a power $k$ such that $\height \bigl( f(x) \cdot f(x^k)
\bigr) = \height(f)^2$ and both $g_1(x^k)$ and $g_2(x^k)$ are irreducible.
Clearly, choosing $k > \deg(f)$ automatically satisfies the first
condition~---~some smaller values may also happen to work.
Lemma~\ref{handy-lemma} tells us that there are infinitely many values of $k$
which satisfy the irreducibility conditions.  Thus we have infinitely many
choices for $k$ which satisfy all the conditions.  In our specific example
we can take $k=13$.  This leads us to $f(x) \cdot f(x^{13})$ a polynomial
of degree~$168$ and height~$1$ having four irreducible factors each of
height~$2$.  Here we see plainly that this construction produces
polynomials of rather large degree.

Now, since $\height(f)=1$ we can repeat the above process to produce a
polynomial of height~$1$ having any specified even number of irreducible
factors each of height~$2$.  For instance, $f(x) \cdot f(x^{13}) \cdot
f(x^{169})$ is a polynomial of degree~$28560$ and height~$1$ having six
irreducible factors each of height~$2$.

\medskip

Here is another example of the same construction.  We take the palindromic
$*$-symmetric example in degree~$28$ from section~\ref{palindromic-irred}:
specifically we take
\begin{eqnarray*}
  g_1 &=&  2x^{14} + 6x^{13} + 10x^{12} + 10x^{11} + 4x^{10} - 6x^9 - 14x^8 - 17x^7\cr
      & & - 14x^6 - 6x^5 + 4x^4 + 10x^3 + 10x^2 + 6x + 2
\end{eqnarray*}
and $g_2(x) = g_1(-x)$, thus we have that $\height(g_1 g_2)^2 <
\height(g_1)$.  By experimentation we find that the exponent $k=15$
satisfies all the conditions.  So the product $g_1(x) g_2(-x) g_1(x^{15})
g_2(-x^{15})$ is a polynomial of degree~$448$ and height~$16$ having four
irreducible factors each of height~$17$.  However, this time we cannot
repeat the process because $\height(g_1 g_2)^3 > \height(g_1)$.

%%%%%%%%%%%%%%%%%%%%%%%%%%%%%%%%%%%%%%%%%%%%%%%%%%%%%%%%%%%%%%%%%%%%%%%%%%%%%
%%%%%%%%%%%%%%%%%%%%%%%%%%%%%%%%%%%%%%%%%%%%%%%%%%%%%%%%%%%%%%%%%%%%%%%%%%%%%
\section {Conclusion}

In this paper we have compared various factor coefficient bounds, and shown
with concrete examples that no one bound is universally better or worse
than the others~---~so it is always a good idea to compute them all and
pick whichever comes out smallest.  We have refined some existing bounds,
and have made explicit a bound latent in an article by Mignotte.  We have
shown how the degree aware bounds can be combined to produce a better bound
in some instances.

In the second part of the paper we have exhibited several examples of
factorizations with unusually large factors which explain partly why the
factor bounds have to be rather larger than one might expect.
Nevertheless, the known bounds are almost always ``far too big'', most
especially if we are concerned with the sizes of irreducible factors in
$\ZZ[x]$.  Indeed, all the degree aware bounds remain valid for (suitably
scaled, reducible) factors in $\CC[x]$, and this seems to be the main
reason why the bounds are so loose.  If good bounds can be found for
irreducible factors in $\ZZ[x]$ then we expect a consequent significant
improvement in speed when factorizing polynomials in $\ZZ[x]$.

Currently, in the context of practical polynomial factorization, the
problem of overly large bounds is mitigated by using an ``engineering''
technique known as {\it early termination\/} during the lifting phase; we
feel that better bounds would furnish a more ``mathematical'' response to
the same problem.

\medskip

Collins~\cite{Col04} had already published some examples (including some
very small ones) answering the question in the Conclusion of~\cite{BTW93}.
Here we have extended considerably the set of known examples, and have given
examples exhibiting far larger ratio than those given by Collins.  We have also
improved the extremal ratio in degree~$16$ found by Collins.  Our examples
in section~\ref{palindromic-irred} boost credence in the conjecture that
there exist factorizations with arbitrarily large ratios (even if we
restrict to palindromic $*$-symmetric factorizations of height~$1$
polynomials), though they fall short of providing a proof or a concrete
family.

In section~\ref{symmetric-reducible-family} we exhibited a family of
reducible $*$-symmetric factorizations with unbounded ratio, where the
ratio grows exponentially with degree.  This family thus forces any single
factor bound to grow at least as fast.

The examples in section~\ref{irred-large-ratio} compel any ideal
``irreducible single factor bound'' to grow with degree, though the rate of
growth appears to be much slower than for single factor bounds valid for
any (suitably scaled) factorization in $\CC[x]$.  This suggests that such
an ideal single factor bound could be very much smaller than the currently
known ones.

We have also shown how to construct polynomials with any even number of
irreducible factors and whose irreducible factorization has ratio greater
than~$1$.  Using the examples from section~\ref{ht1-large-ratio-palin} we
can create explicitly examples with ratio up to~$11$.  So far we have not
encountered any irreducible factorization with an odd number of factors and
ratio greater than~$1$, though we think it very likely that such
factorizations exist.

\subsection*{Questions and Conjectures}

There are a number of unanswered questions related to the results contained
in this article.  Here are a few of them:
\begin{small}
\begin{itemize}
\item Do two factor irreducible factorizations of arbitrarily great ratio
  exist?
\item How does the maximal ratio of two factor factorizations
  increase with degree?
\item Is there an easy way to construct high ratio two factor
  factorizations?
\item Do high ratio factorizations with an odd number of factors exist?
\end{itemize}
\end{small}

We conclude with two more conjectures which arose during our studies.  The
first cropped up while conducting the studies presented in
section~\ref{symmetric-reducible-family}.  The second one is the
consequence of some fruitless searches for polynomials whose square has
lower height than the original~---~this search was inspired by the existence
of high ratio palindromic $*$-symmetric factorizations which ``look vaguely
like the square of a polynomial''.  The second conjecture has a natural
extension to higher powers, but this extended form is backed up by far less
computational evidence.

\subsubsection*{Conjecture 1}

Let $f \in \CC[x]$ then asymptotically $\height(f^n) \approx K_f
|f|^n_\circ / \sqrt{n}$ where $K_f$ is a constant depending on~$f$, and
$|f|_\circ = \max \{ f(e^{2\pi i \theta}) : 0 \le \theta < 1 \}$ is the
maximum modulus of~$f$ on the unit circle in $\CC$.

\bigskip
\begin{small}
\noindent
 {\bf Note} In the particular case of $f=x+1$ we can use the
  binomial theorem and Stirling's approximation to deduce that $K_f =
  \sqrt{2/\pi}$.  For the general case, F.~Amoroso kindly pointed out that it is
  not hard to show that
$$
|f|^n_\circ (1+n \deg f)^{-1} \le \height(f^n) \le |f|^n_\circ
$$
\end{small}

\subsubsection*{Conjecture 2}

Let $f \in \ZZ[x]$ be non-zero and not of the form $\pm x^d$.  Then
$\height(f^2) \ge 2 \height(f)$.

\bigskip
\begin{small}
\noindent
{\bf Note} The factor~$2$ in the conjecture cannot be replaced by a larger value
since we have equality for $(x+1)^2 = x^2+2x+1$.  Indeed there are many
other polynomials which achieve equality.  For instance, taking
$f=x^{14}-x^{12}-x^{10}-x^8-4x^7+x^6+x^4+x^2-1$ we obtain a ``non-trivial''
example satisfying $\height(f^2) = 2 \height(f)$.  The largest height example with
this property which we have found is a polynomial of height~$30$ and
degree~$1680$.  It seems likely that there are examples of arbitrarily great height.
\end{small}

\subsubsection*{Conjecture 2 (extended)}

Let $f \in \ZZ[x]$ be non-zero and not of the form $\pm x^d$.  Then for any
integer $k>0$ we have $\height(f^k) \ge R_k \height(f)$ where the factor
$R_k = {k \choose \lfloor k/2 \rfloor}$.

\bigskip
\begin{small}
\noindent
{\bf Note} Choosing $f = x+1$ shows that the factor $R_k$ cannot be made
larger.  Equality can also be achieved for other polynomials: \eg~with $k=3$ we
can take $f=x^5 + x^4 - x + 1$.  Indeed the only polynomials we have
found which achieve equality are (obviously) all binomials of height~$1$, and certain
quadrinomials of height~$1$.
\end{small}

\end{document}